\documentclass{article}

\usepackage[T2A]{fontenc}
\usepackage[utf8]{inputenc}
\usepackage[russian,english]{babel}
\usepackage{amsfonts}
\usepackage{amsthm}
\usepackage{amsmath}
\usepackage{amssymb}
\usepackage{cancel}
\usepackage{tabu}
\usepackage{array}
\usepackage{tikz}
\usepackage{makecell}
\usepackage{xcolor}
\usepackage{graphicx}
\usepackage[final]{pdfpages}
\usepackage{bbm}

\usepackage{titlesec}
\usepackage{indentfirst}
\usepackage{enumitem}
\usepackage{caption}
\usepackage{refcount}
\usepackage{cite}
\usepackage[colorlinks=true, linkcolor=blue, citecolor=blue, urlcolor=blue, hypertexnames=false]{hyperref}
\usepackage{stmaryrd,scalerel}
\usepackage{titling}
\usepackage{etoc}

\titleformat{\section}[block]          
  {\centering\large\bfseries}          
  {\S\,\thesection.}                   
  {1em}                                
  {}                                   

\makeatletter
\renewenvironment{proof}[1][\proofname]{\par
  \pushQED{\qed}%
  \normalfont \topsep6\p@\@plus6\p@\relax
  \trivlist
  \item[\hspace*{\parindent}\hskip\labelsep \bfseries #1\@addpunct{.}]%
  \ignorespaces
}{%
  \popQED\endtrivlist\@endpefalse
}
\makeatother

\graphicspath{ {./images/} }

\renewcommand{\ge}{\geqslant}

\renewcommand{\le}{\leqslant}

\newtheoremstyle{style2}
  {\topsep}
  {\topsep}
  {\itshape}
  {\parindent}
  {\bfseries}
  {.}
  {5pt plus 1pt minus 1pt}
  {}

\title{На скольки словах Фибоначчи сосредоточена мера Планшереля}

\author{В.~Ю.~Евтушевский}

\begin{document}

\selectlanguage{russian}

\maketitle

\etocsettagdepth{ru}{all}
\etocsettagdepth{en}{none}
\tableofcontents

\theoremstyle{style2}
\newtheorem{Lemma}{Лемма}

\theoremstyle{style2}
\newtheorem{Alg}{Алгоритм}

\theoremstyle{style2}
\newtheorem{Col}{Следствие}

\theoremstyle{style2}
\newtheorem{theorem}{Теорема}

\theoremstyle{style2}
\newtheorem{Def}{Определение}

\theoremstyle{style2}
\newtheorem{Prop}{Предложение}

\theoremstyle{style2}
\newtheorem{Problem}{Задача}

\theoremstyle{style2}
\newtheorem{Zam}{Замечание}

\theoremstyle{style2}
\newtheorem{Oboz}{Обозначение}

\theoremstyle{style2}
\newtheorem{Ex}{Пример}

\theoremstyle{style2}
\newtheorem{Nab}{Наблюдение}

\begin{abstract}
На множестве $\mathbb{YF}_n$ слов из $1$ и $2$ 
с суммой цифр $n$ рассматривается вероятностное распределение
$\mu_P$,
соответствующее мере Планшереля на путях в графе
Юнга --- Фибоначчи. Изучается асимптотика по $n$ энтропии
 распределения $\mu_P$ и минимального количества
слов, на которых $\mu_P$ почти полностью сосредоточено. 
\end{abstract}

\etocdepthtag.toc{ru}
\section{Введение}
Рассмотрим слова над алфавитом $\{1,2\}$ с данной суммой цифр $n.$ Как известно, их количество есть число Фибоначчи $F_{n+1}$ ($F_0=0,$ $F_1=1,$ $F_{k+2}=F_{k+1}+F_k$), и это самая распространённая комбинаторная интерпретация чисел Фибоначчи. Также можно думать о разбиениях полосы $2\times n$ на домино $1\times 2$ и $2\times 1,$ сопоставляя двойки парам горизонтальных домино, а единицы вертикальным домино.

Введём на этом множестве слов частичный порядок: будем говорить, что слово $v$ \emph{предшествует} слову $w$, если после удаления самого длинного общего суффикса в слове $w$ остаётся не меньше двоек, чем в слове $v$ остаётся цифр. 

Это действительно частичный порядок, более того, соответствующее частично упорядоченное множество является модулярной решёткой, известной как решётка Юнга --- Фибоначчи.

Графом Юнга --- Фибоначчи (он изображён на рис. \ref{yf}) называют диаграмму Хассе этой решётки. Это градуированный граф, который мы представляем растущим снизу вверх, начиная с пустого слова. Градуировкой служит функция суммы цифр. Опишем явно, как устроены ориентированные рёбра. Рёбра <<вверх>> из данного слова $v$ ведут в слова, получаемые из $v$ одной из двух операций:
\begin{enumerate}
    \item заменить самую левую единицу на двойку;
    \item вставить единицу в такое место,
    что левее неё нет единиц.
\end{enumerate}

\begin{figure}[htbp]
    \centering
    \includegraphics[width=12cm, height=10cm]{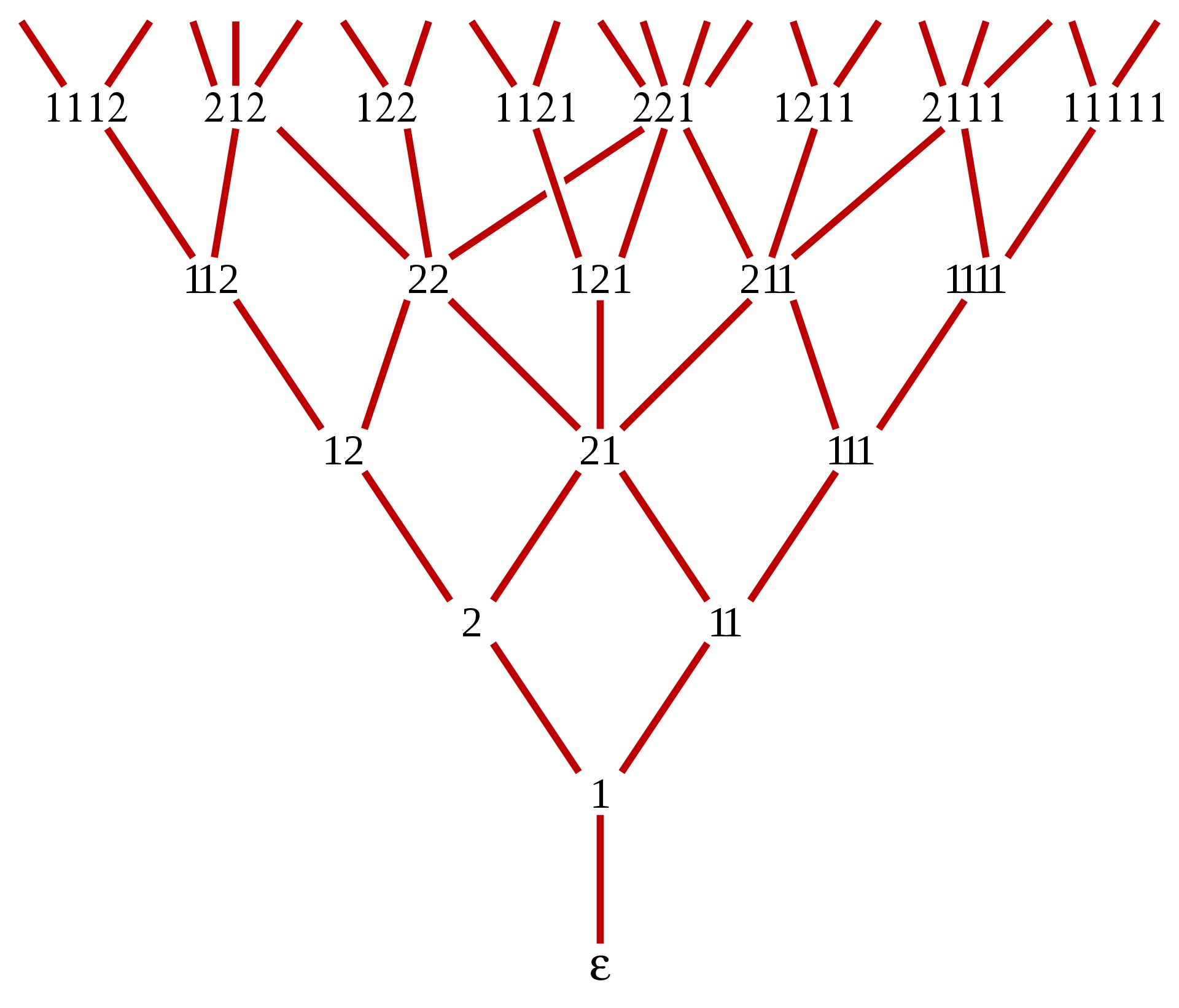}
    \caption{Граф Юнга --- Фибоначчи}
    \label{yf}
\end{figure}

В данной работе будем считать граф Юнга --- Фибоначчи ориентированным и содержащим исключительно рёбра <<вверх>>.

Этот граф помимо модулярности является $1$-дифференциальным, то есть для каждой вершины исходящая степень на $1$ превосходит входящую степень.

Изучение градуированного графа Юнга --- Фибоначчи было инициировано в 1988 г. одновременно и независимо друг от друга такими математиками, как Ричард Стенли \cite{St} и Сергей Владимирович Фомин \cite{Fo}.

Одна из причин интереса к нему в том, что существует всего две корневые локально конечные $1$-дифференциальные модулярные решётки, вторая --- это решётка диаграмм Юнга, имеющая ключевое значение в теории представлений симметрической группы \cite[Theorem~1.3]{Bur}. 

Ключевые вопросы о градуированных графах касаются центральных мер на пространстве (бесконечных) путей в графе. Эта точка зрения последовательно развивалась в работах Анатолия Моисеевича Вершика, к недавнему обзору которого \cite{Ver} и приводимой там литературе мы отсылаем читателя.

Среди центральных мер выделяют те, которые являются пределами мер, индуцированных путями в далёкие вершины --- так называемую \emph{границу Мартина} графа.

Граница пространства путей графа Юнга --- Фибоначчи изучалась в работе Фредерика Гудмана и Сергея Васильевича Керова (2000) \cite{GK}. Они использовали алгебраический формализм Окады \cite{Okada}.

Гудман и Керов доказали, что список интересующих нас центральных мер исчерпывается следующими мерами:
\begin{enumerate}
    \item Мера Планшереля: мера множества путей, проходящих через данную вершину $v$ ранга $n,$ равна ${D(v)^2}/{n!}$, где $D(v)$ --- количество путей <<вверх>> из корня в вершину $v$.
    \item Меры $\mu_{w,\beta},$ параметризующиеся некоторым бесконечным словом $w,$ содержащим <<достаточно мало>> двоек, и числом $\beta\in(0,1]$.
\end{enumerate}

Данная работа посвящена мере Планшереля. Эта мера исследовалась, в частности, С.~В.~Керовым и Александром Васильевичем Гнединым  \cite{KerGned}, которые доказали
её эргодичность. В работе \cite{BE} Иваном Алексеевичем Бочковым 
и автором были получены явные формулы для остальных мер и доказана их 
эргодичность.

Для каждого $n$
мера Планшереля задаёт вероятностную меру на множестве $\mathbb{YF}_n$ 
вершин графа Юнга --- Фибоначчи ранга $n.$ В данной работе
изучается именно эта последовательность вероятностных мер.

В разделе 3 мы вычисляем асимптотику (с точностью до $O(1)$) энтропий таких мер. В разделе 4 мы доказываем, что существует константа $c'\approx 1,79556,$ такая, что для всех $c>c'$ при больших $n$ мера Планшереля \emph{сосредоточена} на $\lfloor\exp{(c\cdot\ln^2{n})}\rfloor$ вершинах графа Юнга --- Фибоначчи ранга $n.$ В разделе 5 мы доказываем, что при больших $n$ мера Планшереля  
\emph{не сосредоточена} ни на каких $\lfloor\exp{({1}/{16}\cdot\ln^2{n})}\rfloor$ вершинах графа Юнга --- Фибоначчи ранга $n.$

Отметим, что энтропия меры Планшереля графа Юнга 
(заданная на множестве $\mathbb{Y}_n$ диаграмм Юнга с $n$
клетками) растёт значительно
быстрее --- как $\Theta(\sqrt{n})$ \cite{VK}, поэтому она не сосредоточена
менее чем на $\exp({\rm const}\sqrt{n})$ вершинах. Это тем
удивительнее, что \emph{общее} число вершин ранга $n$
графа Юнга --- Фибоначчи (то есть число Фибоначчи $F_{n+1}$) растёт, напротив, значительно быстрее, чем общее число вершин ранга $n$ графа Юнга (то есть число разбиений
$p(n)$ числа $n$).

    \section{Основные понятия и обозначения}

Введём основные понятия и обозначения.

Через $\mathbb{N}_0$ и $\mathbb{N}$ обозначаются множества целых неотрицательных и целых положительных (натуральных) чисел, соответственно. 
Для $m,n\in \mathbb{N}_0:$  $m\leqslant n$ положим
\begin{gather*}
	\overline{m,n}:=\{m,m+1,\ldots,n\},\quad \overline{n}:=\overline{0,n}=\{0,1,\ldots,n\}.
\end{gather*}

Пусть $\mathbb{YF}$ --- это граф Юнга --- Фибоначчи, его вершины --- конечные слова из $1$ и $2.$ Далее $\mathbb{YF}$ как множество --- это множество именно вершин графа Юнга --- Фибоначчи. Для $v,w\in\mathbb{YF}$ определено слово $vw$ --- конкатенация $v$ и $w.$ 

Для $v \in \mathbb{YF}$ сумму цифр слова $v$ обозначим через $|v|$. Число $|v|$ --- \emph{ранг} вершины $v$ в графе $\mathbb{YF}$ (считаем, что пустое слово имеет ранг $0$ и переход по ребру наверх увеличивает ранг на $1$). 

Для целого неотрицательного $n$ множество вершин графа $\mathbb{YF}$ ранга $n$ обозначим через
\begin{gather*}
	\mathbb{YF}_n:=\{v\in\mathbb{YF}:\;|v|=n\}.
\end{gather*}	
В данной работе будем считать граф $\mathbb{YF}$ ориентированным с рёбрами из вершин меньшего ранга в вершины большего ранга.

Для $v\in \mathbb{YF}$ количество путей из $\varepsilon$ в $v$ обозначим через $D(v).$ Хорошо известно, что 
\begin{gather}\label{fd}
    D(\varepsilon)=1,\qquad D(1v)= D(v),\qquad D(2v)=(|v|+1)\cdot D(v).   
\end{gather}
Доказательство см. напр. в статье Гнедина и Керова \cite[формула~3.4]{KerGned}. 

В частности, непосредственно из рекурсивной формулы для $D(v)$ следует, что для любых $n\in\mathbb{N}_0$ 
\begin{gather}\label{factorial}
    \sum_{v\in\mathbb{YF}_n} D^2(v)=n!.
\end{gather}

\emph{Мера Планшереля} $\mu_P$ графа Юнга --- Фибоначчи --- это такая вероятностная мера на пространстве бесконечных путей из корня, что для любого конечного пути $t=(\varepsilon,\ldots,v),$ где $v\in\mathbb{YF}_n,$ мера множества всех бесконечных путей, начинающихся с $t,$ равна $D(v)/n!.$ Через $\mu_P(v)=D^2(v)/n!$ обозначим меру всех путей, проходящих через $v.$

Таким образом, при данном $n\in \mathbb{N}_0$ функция $\mu_P(v)$ задаёт вероятностную меру на множестве $\mathbb{YF}_n.$ В этом смысле понимаются выражения типа $\mu_P(A)$ для подмножеств $A\subseteq\mathbb{YF}_n$.

\emph{Энтропию} такой вероятностной меры на множестве $\mathbb{YF}_n$ обозначим через $H_n.$ То есть
\begin{gather*}
    H_n:=-\sum_{v\in\mathbb{YF}_n}\mu_P(v)\cdot \ln{\mu_P(v)}= \sum_{v\in\mathbb{YF}_n}\frac{D^2(v)}{n!}\cdot(\ln{n!}-2\ln{D(v)}).
\end{gather*}

Также пусть $\{f(n)\}_{n=1}^\infty$ --- последовательность неотрицательных чисел. Тогда при фиксированном $n$ обозначим через $A_{f}(n)$ множество из $\lfloor f(n)\rfloor$ вершин графа Юнга --- Фибоначчи ранга $n$ с наибольшим значением меры Планшереля (вообще говоря, множество $A_f(n)$ определено не однозначно; в случае неоднозначности фиксируем выбор произвольно). Если $\lfloor f(n)\rfloor > |\mathbb{YF}_n|,$ то будем считать, что $A_{f}(n)=\mathbb{YF}_n.$ Кроме того, пусть $\overline{A_{f}}(n)=\mathbb{YF}_n\setminus A_{f}(n).$ Также назовём последовательность $f$ \emph{покрывающей}, если $\lim_{n\to\infty}\mu_{P}(A_f(n))=1$ (или, что то же самое, $\lim_{n\to\infty}\mu_{P}(\overline{A_f}(n))=0)$ и \emph{не покрывающей}, если $\varlimsup_{n\to\infty}\mu_{P}(A_f(n))<1.$

\section{Энтропия последовательности мер}

Сразу докажем рекурсивную формулу для энтропий вероятностных мер, заданных мерой Планшереля на множествах $\mathbb{YF}_n:$ 
\begin{Prop}\label{entrec}
$H_0=H_1=0,$ при $n\ge 2$ выполняется
\begin{gather*}\label{entrec1}
H_n=\frac{1}{n}\cdot H_{n-1}+\frac{n-1}{n}\cdot H_{n-2}+\ln{n}-\frac{n-1}{n}\cdot \ln{(n-1)}.    
\end{gather*}    
\end{Prop}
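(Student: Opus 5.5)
Мы будем действовать прямым разбиением $\mathbb{YF}_n$ по первой цифре слова. Каждое слово ранга $n\ge 2$ однозначно записывается либо как $1v$ с $v\in\mathbb{YF}_{n-1}$, либо как $2u$ с $u\in\mathbb{YF}_{n-2}$. Согласно формулам \eqref{fd},
\begin{gather*}
\mu_P(1v)=\frac{D^2(v)}{n!}=\frac1n\,\mu_P(v),\qquad
\mu_P(2u)=\frac{(n-1)^2D^2(u)}{n!}=\frac{n-1}{n}\,\mu_P(u),
\end{gather*}
где справа стоят меры Планшереля на $\mathbb{YF}_{n-1}$ и $\mathbb{YF}_{n-2}$ соответственно. В силу \eqref{factorial} суммы этих мер по $v$ и $u$ равны $1$. Значит, первая цифра распределена как $1$ с вероятностью $1/n$ и как $2$ с вероятностью $(n-1)/n$, а условные распределения остатка слова --- это в точности меры Планшереля рангов $n-1$ и $n-2$.

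Далее подставляем эти выражения в определение $H_n$ и раскрываем логарифмы:
\begin{gather*}
H_n=-\sum_{v}\frac{\mu_P(v)}{n}\Bigl(\ln\mu_P(v)-\ln n\Bigr)-\sum_{u}\frac{(n-1)\mu_P(u)}{n}\Bigl(\ln\mu_P(u)+\ln\frac{n-1}{n}\Bigr).
\end{gather*}
Пользуясь нормировкой, получаем
\begin{gather*}
H_n=\frac1n H_{n-1}+\frac{n-1}{n}H_{n-2}+\frac{\ln n}{n}-\frac{n-1}{n}\ln\frac{n-1}{n}.
\end{gather*}
Это просто формула энтропии составного распределения: энтропия первой цифры плюс усреднённая условная энтропия остатка. Остаётся заметить, что $\frac{\ln n}{n}+\frac{n-1}{n}\ln n=\ln n$, откуда свободный член равен $\ln n-\frac{n-1}{n}\ln(n-1)$, как и утверждается.

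Начальные значения очевидны: $\mathbb{YF}_0=\{\varepsilon\}$ и $\mathbb{YF}_1=\{1\}$ состоят из одной точки, поэтому $H_0=H_1=0$. Существенных препятствий здесь нет. Единственное, за чем нужно проследить, --- корректность разбиения (первая цифра однозначно определяет остаток) и нормировка \eqref{factorial} для рангов $n-1$ и $n-2$.
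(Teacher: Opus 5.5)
Your proof is correct and follows the paper's route. You split $\mathbb{YF}_n$ by the first digit, use \eqref{fd} to get $\mu_P(1v)=\frac1n\mu_P(v)$ and $\mu_P(2u)=\frac{n-1}{n}\mu_P(u)$, and collapse the sums with the normalization \eqref{factorial}. The only difference is presentational: the paper works with $\ln n!-2\ln D(v)$ directly, while you work with $\ln\mu_P$ and note that the free term is exactly the entropy of the first digit (the chain rule for entropy).
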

\begin{proof}
Равенство $H_0=H_1=0$ понятно. При $n\ge 2$ воспользуемся \eqref{fd}:
\begin{align*}
H_n=&\sum_{v\in\mathbb{YF}_{n-1}}\frac{D^2(1v)}{n!}\cdot(\ln{n!}-2\ln{D(1v)})+\sum_{v\in\mathbb{YF}_{n-2}}\frac{D^2(2v)}{n!}\cdot(\ln{n!}-2\ln{D(2v)})\\
=&\frac{1}{n}\cdot\sum_{v\in\mathbb{YF}_{n-1}}\frac{D^2(v)}{(n-1)!}\cdot(\ln{(n-1)!}-2\ln{D(v)}+\ln{n})\\
&+\frac{n-1}{n}\cdot\sum_{v\in\mathbb{YF}_{n-2}}\frac{D^2(v)}{(n-2)!}\cdot(\ln{(n-2)!}-2\ln{D(v)}+\ln{n}-\ln{(n-1)}).
\end{align*}
Теперь предложение следует из \eqref{factorial}.
\end{proof}

Из разложения логарифма $\ln{(1-c/n)}$ в ряд Меркатора непосредственно следует, что при $n\ge 3$ выполняются 
\begin{align}
    \ln{(n-1)}=\ln{n}-\frac{1}{n}+O\left(\frac{1}{n^2}\right),&\qquad \ln{(n-2)}=\ln{n}-\frac{2}{n}+O\left(\frac{1}{n^2}\right);\notag\\
    \ln^2{(n-1)}=\ln^2{n}-\frac{2\ln{n}}{n}+O\left(\frac{\ln{n}}{n^2}\right),&\qquad \ln^2{(n-2)}=\ln^2{n}-\frac{4\ln{n}}{n}+O\left(\frac{\ln{n}}{n^2}\right).\label{f1}    
\end{align} 

Мы готовы вычислить с точностью до $O(1)$ асимптотику энтропий вероятностных мер, заданных мерой Планшереля на множествах $\mathbb{YF}_n:$ 
\begin{theorem}\label{t1}
Имеет место асимптотическое соотношение
    \begin{gather*}\label{t11}
    H_n=\frac{1}{4}\ln^2{n}+\frac{1}{2}\ln{n}+O(1).
    \end{gather*}
\end{theorem}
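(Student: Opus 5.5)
The plan is to compare $H_n$ directly with the candidate main term
\[
f(n):=\tfrac14\ln^2 n+\tfrac12\ln n
\]
through the recursion of Предложение~\ref{entrec}. We will show that the difference $G_n:=H_n-f(n)$ satisfies almost the same recursion, up to a summable error, and then that such a recursion keeps $G_n$ bounded.

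First rewrite Предложение~\ref{entrec} as
\[
H_n=\tfrac1n H_{n-1}+\bigl(1-\tfrac1n\bigr)H_{n-2}+r^{(0)}_n,\qquad r^{(0)}_n:=\ln n-\tfrac{n-1}{n}\ln(n-1).
\]
Expanding with \eqref{f1} gives
\[
r^{(0)}_n=\bigl(\ln n-\ln(n-1)\bigr)+\tfrac{\ln(n-1)}{n}=\tfrac{\ln n+1}{n}+O\!\left(\tfrac{\ln n}{n^2}\right).
\]
Next compute what $f$ itself produces in the same recursion, namely
\[
\tfrac1n f(n-1)+\bigl(1-\tfrac1n\bigr)f(n-2)=f(n-2)+\tfrac1n\bigl(f(n-1)-f(n-2)\bigr).
\]
By \eqref{f1},
\[
f(n)-f(n-2)=\tfrac14\cdot\tfrac{4\ln n}{n}+\tfrac12\cdot\tfrac{2}{n}+O\!\left(\tfrac{\ln n}{n^2}\right)=\tfrac{\ln n+1}{n}+O\!\left(\tfrac{\ln n}{n^2}\right),
\]
and $f(n-1)-f(n-2)=O(\ln n/n)$, so $\tfrac1n\bigl(f(n-1)-f(n-2)\bigr)=O(\ln n/n^2)$. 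Hence
\[
f(n)=\tfrac1n f(n-1)+\bigl(1-\tfrac1n\bigr)f(n-2)+\tfrac{\ln n+1}{n}+O\!\left(\tfrac{\ln n}{n^2}\right).
\]
The coefficients $\frac14,\frac12$ are exactly what makes the leading $\frac{\ln n}{n}$ and $\frac1n$ terms match $r^{(0)}_n$. Subtracting, we get for $n\ge3$
\[
G_n=\tfrac1n G_{n-1}+\bigl(1-\tfrac1n\bigr)G_{n-2}+r_n,\qquad r_n=O\!\left(\tfrac{\ln n}{n^2}\right),
\]
so $\sum_n|r_n|<\infty$.

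Finally, $G_n$ is a convex combination of $G_{n-1}$ and $G_{n-2}$ (weights $\frac1n$ and $1-\frac1n$, both nonnegative) plus $r_n$. Setting $M_n:=\max(|G_n|,|G_{n-1}|)$, this gives $|G_n|\le M_{n-1}+|r_n|$, hence $M_n\le M_{n-1}+|r_n|$. Therefore
\[
M_n\le M_2+\sum_{k\ge3}|r_k|<\infty,
\]
which is $H_n=f(n)+O(1)$.

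The only delicate point is the bookkeeping of the $O(\cdot)$ terms in the second step. One must check that every term of order $\frac{\ln n}{n}$ or $\frac1n$ cancels exactly, because even a leftover $c/n$ would sum to a divergent $c\ln n$. Everything remaining must be $O(\ln n/n^2)$, which \eqref{f1} provides.
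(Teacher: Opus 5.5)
Your proposal is correct and follows essentially the same route as the paper. The paper also subtracts $\frac14\ln^2 n+\frac12\ln n$, uses the recursion of Proposition~\ref{entrec} together with \eqref{f1} to obtain $|h_n|\le\max(|h_{n-1}|,|h_{n-2}|)+O(\ln n/n^2)$, and concludes from the convergence of $\sum\ln n/n^2$; your separate computation of what $f$ produces under the recursion and your explicit running maximum $M_n$ just organize that same computation more transparently.
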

\begin{proof}
Для каждого натурального $n$ обозначим $h_n:=H_n-1/4\ln^2{n}-1/2\ln{n}.$ Из предложения \ref{entrec} и формул \eqref{f1} следует, что выполняется асимптотическое соотношение
\begin{align*}
    |h_n|=&\biggl|\frac{1}{n}\cdot H_{n-1}+\frac{n-1}{n}\cdot H_{n-2}+\ln{n}-\frac{n-1}{n}\cdot\ln{(n-1)}-\left(\frac{1}{4}\ln^2{n}+\frac{1}{2}\ln{n}\right)\biggr|\\
    =&\biggl|\frac{1}{n}\cdot\left(\frac{1}{4}\ln^2{(n-1)}+\frac{1}{2}\ln{(n-1)}+h_{n-1}\right)+\frac{n-1}{n}\cdot\left(\frac{1}{4}\ln^2{(n-2)}+\frac{1}{2}\ln{(n-2)}+h_{n-2}\right)\\
    &-\frac{1}{4}\ln^2{n}+\frac{1}{2}\ln{n}-\frac{n-1}{n}\cdot\ln{(n-1)}\biggr|\\
    \le& O\left(\frac{\ln{n}}{n^2}\right)+\biggl|\frac{1}{n}\cdot h_{n-1}+\frac{n-1}{n}\cdot h_{n-2}\biggr|+\biggl|-\frac{1}{4}\ln^2{n}+\frac{1}{4n}\cdot\ln^2{n}\\
    &+\frac{n-1}{4n}\cdot\left(\ln^2{n}-\frac{4\ln{n}}{n}\right)+\frac{1}{2}\ln{n}-\frac{2n-3}{2n}\cdot\left(\ln{n}-\frac{1}{n}\right)+\frac{n-1}{2n}\cdot\left(\ln{n}-\frac{2}{n}\right)\biggr|\\
    \le& O\left(\frac{\ln{n}}{n^2}\right)+\max{(|h_{n-1}|,|h_{n-2}|)}.
\end{align*}

Теперь теорема следует из сходимости ряда $\sum_{n=1}^\infty \ln{n}/n^2.$
\end{proof}

\begin{Col}\label{s1} 
Для любой последовательности $\{g(n)\}_{n=1}^\infty$ такой, что $g(n)/\ln^2{n}\to \infty,$ последовательность $f(n)=\exp{g(n)}$ является покрывающей.
\end{Col}
\begin{proof}
Зафиксируем натуральное $n.$ Несложно понять, что для каждой вершины $v\in\overline{A_f}(n)$ выполняется $\mu_P(v)\le 1/f(n),$ из чего следует, что $-\ln{\mu_P(v)}\ge \ln{f(n)}=g(n),$ а значит, 
    \begin{align*}
    g(n) \cdot \mu_P(\overline{A_f}(n))\le -\sum_{v\in \overline{A_f}(n)}\mu_P(v)\cdot \ln{\mu_P(v)}\le  H_n\le O(\ln^2{n}).
    \end{align*}
    Из данного неравенства очевидно следует, что $\mu_P(\overline{A_f}(n))\to 0,$ что и требовалось.
\end{proof}

\section{Покрывающие последовательности вида $\exp{(c\cdot\ln^2{n})}$}

В данном разделе нам понадобятся следующие обозначения:

Пусть $n\in\mathbb{N}_0.$ Обозначим через $S_n$ множество перестановок множества $\overline{1,n}.$ Хорошо известно, что $|S_n|=n!.$ Также при $i\in\mathbb{Z}$ обозначим через $S_{n,i}$ множество перестановок множества $\overline{1,n}$ ровно с $i$ нечётными циклами, а через
$\mathbb{YF}_{n,i}$ --- множество вершин графа Юнга --- Фибоначчи ранга $n$ ровно с $i$ единицами. Очевидно, что при $i\notin \overline{n}$ выполняется $S_{n,i}=\mathbb{YF}_{n,i}=\varnothing.$

Кроме того, пусть $W(z)$ --- это $W$-функция Ламберта. В данной работе из её свойств используется только то, что ${1}/(2W(1/e))\approx 1,79556$ является решением одного конкретного уравнения.

Следующее предложение следует, например, из \cite[раздел~4.4]{Fo2}, но мы приведём элементарное доказательство.
\begin{Prop}\label{p2}
    Пусть $n\in\mathbb{N}_0,$ $i\in\overline{n}.$ Тогда
    \begin{gather*} \sum_{v\in\mathbb{YF}_{n,i}}D^2(v)=|S_{n,i}|,\qquad \mu_P(\mathbb{YF}_{n,i})=\frac{|S_{n,i}|}{n!}.
    \end{gather*}
\end{Prop}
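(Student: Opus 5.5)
Будем доказывать первое равенство индукцией по $n$, получив для обеих частей одну и ту же рекурсию; второе равенство тогда сразу следует из определения $\mu_P(v)=D^2(v)/n!$. Положим $a_{n,i}:=\sum_{v\in\mathbb{YF}_{n,i}}D^2(v)$ и $s_{n,i}:=|S_{n,i}|$, считая обе величины нулевыми при $i\notin\overline{n}$. Начальные значения: $a_{0,0}=s_{0,0}=1$ (пустое слово и пустая перестановка) и $a_{1,1}=s_{1,1}=1$, $a_{1,0}=s_{1,0}=0$.

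\textbf{Рекурсия для $a_{n,i}$.} При $n\ge 2$ каждое слово из $\mathbb{YF}_{n,i}$ имеет вид либо $1v$ с $v\in\mathbb{YF}_{n-1,i-1}$, либо $2v$ с $v\in\mathbb{YF}_{n-2,i}$. По \eqref{fd} имеем $D(1v)=D(v)$ и $D(2v)=(n-1)D(v)$, откуда
\begin{gather*}
a_{n,i}=a_{n-1,i-1}+(n-1)^2\,a_{n-2,i}.
\end{gather*}

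\textbf{Рекурсия для $s_{n,i}$.} Нужно проверить, что перестановки удовлетворяют той же рекурсии:
\begin{gather*}
s_{n,i}=s_{n-1,i-1}+(n-1)^2\,s_{n-2,i}.
\end{gather*}
Разобьём $S_{n,i}$ на два класса по элементу $n$. Если $n$ --- неподвижная точка, то её удаление даёт биекцию с $S_{n-1,i-1}$. Иначе рассмотрим $a=\sigma(n)\ne n$ и $b=\sigma^{-1}(n)$. Прямое удаление $n$ из цикла меняет его чётность, поэтому удалим из цикла сразу два элемента, $n$ и $a=\sigma(n)$: положим $\tau(b)=\sigma(a)$, если $b\ne a$, а если $b=a$ (цикл длины $2$), просто выбросим этот цикл. Длина цикла уменьшается на $2$, чётность сохраняется, а цикл длины $2$ чётен, так что число нечётных циклов не меняется. После перенумерации $\overline{1,n}\setminus\{a,n\}\to\overline{1,n-2}$ получаем перестановку из $S_{n-2,i}$.

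\textbf{Подсчёт прообразов --- главное препятствие.} Прообраз восстанавливается по $\tau$, значению $a$ ($n-1$ вариантов) и месту, куда вставляется фрагмент $n\to a$. Вставить его можно после любого из $n-2$ элементов, то есть положить $\sigma(x)=n$, $\sigma(n)=a$, $\sigma(a)=\tau(x)$; кроме того, можно добавить отдельный цикл $(n\,a)$. Итого $n-1$ вариантов. Получаем кратность $(n-1)\cdot(n-1)=(n-1)^2$, и нужная рекурсия доказана. Аккуратность здесь требуется именно в проверке того, что это отображение действительно является $(n-1)^2$-к-одному: например, надо убедиться, что при вставке в цикл, содержащий $x$, не возникает коллизий с вариантом отдельного $2$-цикла.

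Совпадение рекурсий и начальных значений по индукции даёт $a_{n,i}=s_{n,i}$. Деление на $n!$ даёт второе равенство; заодно суммирование по $i$ согласуется с \eqref{factorial}.
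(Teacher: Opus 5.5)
Your proposal is correct and follows the same route as the paper. You derive $a_{n,i}=a_{n-1,i-1}+(n-1)^2a_{n-2,i}$ from \eqref{fd}, and you get the same recursion for $|S_{n,i}|$ by splitting on $\sigma(n)$ and removing the pair $n,\sigma(n)$ from its cycle, with $(n-1)^2$ preimages. The collision you worry about does not occur: the $n-1$ reinsertion options are told apart by the value $\sigma^{-1}(n)\in\overline{1,n-1}$, which equals $a$ exactly for the separate $2$-cycle.
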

\begin{proof}
Ясно, что эти два равенства равносильны. Будем доказывать первое из них индукцией по $n.$ База $n\in\{0,1\}$ очевидна. Докажем переход к $n\ge 2.$ 

Зафиксируем $i\in\overline{n},$ $a\in\overline{1,n}$ и рассмотрим такие перестановки $s\in S_{n,i},$ что $s(n)=a.$ Очевидно, что если $a=n,$ то таких перестановок ровно $|S_{n-1,i-1}|.$ Если же $a\in\overline{1,n-1},$ то при выкидывании пары элементов $n,a$ из цикла, которому они принадлежат, мы получим все перестановки множества $\overline{1,n-1}\setminus a,$ содержащие ровно $i$ нечётных циклов, причём каждую ровно $n-1$ раз, так как вернуть пару элементов $n,a$ мы можем ровно в $n-1$ место. Таким образом, для всех $i\in\overline{n}$ выполняется $|S_{n,i}|=|S_{n-1,i-1}|+(n-1)^2|S_{n-2,i}|.$
    
    Осталось заметить, что согласно \eqref{fd} выполняется
    \begin{align*}
    \sum_{v\in\mathbb{YF}_{n,i}}D^2(v)=&\sum_{v\in\mathbb{YF}_{n-1,i-1}}D^2(1v)+\sum_{v\in\mathbb{YF}_{n-2,i}}D^2(2v)=\\
    =&\sum_{v\in\mathbb{YF}_{n-1,i-1}}D^2(v)+(n-1)^2\cdot\sum_{v\in\mathbb{YF}_{n-2,i}}D^2(v)
    \end{align*}
и применить предположение индукции.
\end{proof}

Экспоненциальную производящую функцию последовательности $|S_n|=n!$ числа перестановок $n$ элементов несложно вычислить и непосредственно, но мы напомним, как она связана с разбиением перестановок на циклы. 

Для каждого $k\in \mathbb{N}$ назовём $k$-циклом ориентированный граф
на $k$ вершинах, представляющий собой цикл (при $k=1$ этот граф ---
петля). 
Заметим, что при $k\in\mathbb{N}$ экспоненциальная производящая функция (по $x$) количества $k$-циклов на множестве
вершин $\overline{1,n}$ равна $(k-1)!\cdot x^k/k!=x^k/k$. 
Экспоненциальная производящая функция количества графов, являющихся
$k$-циклом при некотором $k$, равна, таким образом,
$\sum_k x^k/k$.

По базовой комбинаторной формуле ALL=exp(CONNECTED) экспоненциальная производящая функция количества ориентированных графов на множестве вершин 
$\overline{1,n}$, в которых каждая компонента связности является $k$-циклом при некотором $k$, равна $\exp{(\sum x^k/k)}.$ 
Но такие графы в точности соответствуют перестановкам
$\overline{1,n}$. Таким образом,

\begin{gather*}
    \sum_{n=0}^\infty|S_n|\cdot\frac{x^n}{n!}=\exp{\sum_{k=1}^\infty\frac{x^k}{k}}=\exp{(-\ln{(1-x))}}=\frac{1}{1-x}.
\end{gather*}

Теперь вычислим двойную экспоненциальную производящую функцию 
числа перестановок с учётом количества нечётных циклов.
    \begin{Prop}\label{p3}
        Для формальных степенных рядов имеет место соотношение
        \begin{gather}\label{p31}
        \sum_{n=0}^\infty\sum_{i=0}^n|S_{n,i}|\cdot t^i\cdot\frac{x^n}{n!}=\frac{\exp{\left(\frac{t}{2}\cdot\ln{\frac{1+x}{1-x}}\right)}}{\sqrt{1-x^2}}.
        \end{gather}    
        Более того, \eqref{p31} выполняется при всех вещественных 
        $x\in (0,1)$ и 
        $t>0$.
    \end{Prop}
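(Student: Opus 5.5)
Сначала докажем \eqref{p31} как тождество формальных степенных рядов, применяя формулу ALL=exp(CONNECTED) с весами, в точности как для $\sum|S_n|x^n/n!=1/(1-x)$ выше. Пометим каждый нечётный цикл весом $t$, а каждый чётный цикл весом $1$. Тогда экспоненциальная производящая функция связных компонент равна
\begin{gather*}
t\sum_{k\ \text{нечётно}}\frac{x^k}{k}+\sum_{k\ \text{чётно}}\frac{x^k}{k}
=\frac{t}{2}\ln\frac{1+x}{1-x}-\frac12\ln(1-x^2).
\end{gather*}
Здесь использованы стандартные разложения $\sum_{k\ \text{нечётно}}x^k/k=\frac12\ln\frac{1+x}{1-x}$ и $\sum_{k\ \text{чётно}}x^k/k=-\frac12\ln(1-x^2)$; оба получаются сложением и вычитанием рядов Меркатора для $-\ln(1-x)$ и $-\ln(1+x)$. Экспонента веса несвязного графа равна произведению весов компонент, то есть $t^{i}$, где $i$ — число нечётных циклов. Поэтому весовой вариант формулы ALL=exp(CONNECTED) даёт слева $\sum_{n}\sum_i|S_{n,i}|t^ix^n/n!$, а справа $\exp\bigl(\frac t2\ln\frac{1+x}{1-x}\bigr)\cdot(1-x^2)^{-1/2}$, что и есть \eqref{p31}.

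Если хочется избежать ссылки на весовую экспоненциальную формулу, можно рассуждать иначе: через рекуррентное соотношение $|S_{n,i}|=|S_{n-1,i-1}|+(n-1)^2|S_{n-2,i}|$ из доказательства предложения \ref{p2}. Обозначим левую часть через $G(x,t)$. Рекуррентность переписывается как дифференциальное уравнение
\begin{gather*}
G'_x=tG+x\,(xG)'_x ,\qquad\text{то есть}\qquad (1-x^2)G'_x=(t+x)G,\quad G(0,t)=1.
\end{gather*}
Решение этого уравнения единственно в формальных рядах. Прямая проверка показывает, что правая часть \eqref{p31} ему удовлетворяет: её логарифмическая производная равна $\frac{t}{1-x^2}+\frac{x}{1-x^2}$. Этот путь я бы держал как проверку.

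Затем нужно перейти от формального равенства к поточечному при $x\in(0,1)$, $t>0$. Все коэффициенты левой части неотрицательны. Правая часть есть $\exp$ от ряда $\frac t2\ln\frac{1+x}{1-x}-\frac12\ln(1-x^2)$ с неотрицательными коэффициентами, который сходится при $|x|<1$. Композиция $\exp$ с таким рядом раскладывается в двойной ряд из неотрицательных членов. Перегруппировка рядов с неотрицательными членами допустима по теореме Тонелли, а формальное равенство обеспечивает совпадение коэффициентов. Значит, при $x\in(0,1)$, $t>0$ ряд слева сходится, и его сумма равна значению правой части.

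Главная тонкость — именно этот аналитический шаг: нужно аккуратно обосновать, что формальная композиция $\exp(F(x))$ при $F$ с неотрицательными коэффициентами совпадает численно с $e^{F(x)}$ внутри круга сходимости $F$. Основой служит неотрицательность всех слагаемых после раскрытия $\sum_m F^m/m!$. Альтернативно можно заметить, что обе части аналитичны по $x$ в круге $|x|<1$ при фиксированном $t$: правая часть аналитична как функция, левая ограничена сверху через $|S_{n,i}|\le n!$ и $t^i\le\max(1,t)^n$. Однако такая оценка даёт сходимость лишь при $x<1/\max(1,t)$, поэтому метод неотрицательных членов предпочтительнее.
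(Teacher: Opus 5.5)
Your proof is correct and follows the paper's argument. The formal identity comes from the weighted exponential formula (ALL=exp(CONNECTED)), with weight $t$ on odd cycles and weight $1$ on even cycles. The numerical identity for $x\in(0,1)$, $t>0$ then follows because the coefficients coincide and all terms are nonnegative.

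The differences are minor. The paper handles the analytic step by noting that the right-hand side is a power series convergent for $|x|<1$. This means no a priori bound on $|S_{n,i}|$ is needed, contrary to your closing worry: the left-hand series has the same coefficients, so it inherits radius of convergence at least $1$. Your Tonelli argument with the nonnegative inner series proves the same step more explicitly. Your ODE $(1-x^2)G'_x=(t+x)G$ is a valid independent check that the paper does not use.
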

    \begin{proof} Сначала докажем формальное равенство \eqref{p31}
    При чётных натуральных $k$ положим $r_k:=x^k/k,$ а при нечётных натуральных $k$ положим $r_k:=t \cdot x^k/k.$ Аналогично вышеизложенному рассуждению несложно заметить, что левая часть \eqref{p31} равняется
    \begin{align*}
        \exp{\sum_{k=1}^\infty r_k}=&\exp{\left(
        \sum_{k=1}^\infty \frac{x^k}{k}+(t-1)\cdot\sum_{k=1}^\infty \frac{x^{2k-1}}{2k-1}\right)}\\
        =&\exp{\left(-\ln{(1-x)}+\frac{t-1}{2}\cdot\ln{\frac{1+x}{1-x}}\right)},
    \end{align*}
что равносильно \eqref{p31}. Вывод функционального соотношения 
из формального проводится стандартным образом: 
при данном $t>0$ правая часть \eqref{p31} задаётся сходящимся
при $|x|<1$
степенным рядом по $x$. Степенной ряд слева также сходится,
поскольку его коэффициенты --- многочлены от $t$, причём 
в силу формального равенства \eqref{p31} --- те же,
что и справа. Значит, суммы этих рядов равны и конечны.
От порядка слагаемых сумма слева не зависит, поскольку они неотрицательны.
    \end{proof}
    Далее мы утверждаем, что при больших $n$ мера Планшереля на уровне $n$ сосредоточена на вершинах с достаточно малым числом единиц:
\begin{Lemma}\label{l1}
    Пусть $c>1/(2W(1/e))\approx 1,79556.$ Тогда
    \begin{gather*}
    \sum_{i=\lceil c\cdot\ln{n} \rceil}^n \mu_P(\mathbb{YF}_{n,i})\xrightarrow[n\to\infty]{}0.
    \end{gather*}                
\end{Lemma}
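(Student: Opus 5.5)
The plan is an exponential Markov (Chernoff-type) bound applied to the generating function of Proposition~\ref{p3}. By Proposition~\ref{p2}, the sum in the lemma equals $\sum_{i\ge k}|S_{n,i}|/n!$ with $k=\lceil c\ln n\rceil$. For any fixed $t>1$ we have $\mathbbm{1}_{i\ge k}\le t^{i-k}$, hence
\begin{gather*}
\sum_{i=k}^n \mu_P(\mathbb{YF}_{n,i})\le t^{-k}\cdot\sum_{i=0}^n\frac{|S_{n,i}|}{n!}\,t^i=t^{-k}\cdot a_n(t),
\end{gather*}
where $a_n(t)$ is the coefficient of $x^n$ in $G_t(x)=(1+x)^{(t-1)/2}(1-x)^{-(t+1)/2}$. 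This is the right-hand side of \eqref{p31}, rewritten.

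\textbf{Bounding the coefficient.} Since all coefficients are nonnegative and \eqref{p31} holds for real $x\in(0,1)$, we get $a_n(t)\le G_t(x)/x^n$ for every such $x$. I choose $x=1-1/n$. Then $x^{-n}=(1-1/n)^{-n}\le 4$ for $n\ge 2$, and $(1+x)^{(t-1)/2}\le 2^{(t-1)/2}$. Finally $(1-x)^{-(t+1)/2}=n^{(t+1)/2}$. So $a_n(t)\le C_t\, n^{(t+1)/2}$ with $C_t$ independent of $n$. Combining this with $t^{-k}\le t^{-c\ln n}=n^{-c\ln t}$ gives
\begin{gather*}
\sum_{i=k}^n \mu_P(\mathbb{YF}_{n,i})\le C_t\cdot n^{\frac{t+1}{2}-c\ln t}.
\end{gather*}
It remains to pick $t>1$ with $(t+1)/2<c\ln t$, i.e.\ $c>\frac{t+1}{2\ln t}$. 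The exponent is then negative and the bound tends to $0$.

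\textbf{Choosing the optimal $t$.} This is where the constant $1/(2W(1/e))$ must appear, and it is the only delicate point. Minimize $\varphi(t)=\frac{t+1}{2\ln t}$ over $t>1$. The derivative vanishes when $t\ln t=t+1$, i.e.\ $\ln t=(t+1)/t$, and at that point $\varphi(t)=t/2$. Set $u=t/e$. The critical equation $t(\ln t-1)=1$ becomes $u\ln u=1/e$, so $\ln u=W(1/e)$ and $t=e^{1+W(1/e)}$. Using $W(1/e)e^{W(1/e)}=1/e$, we get $e^{W(1/e)}=1/(eW(1/e))$, hence $t=1/W(1/e)$ and $\min\varphi=1/(2W(1/e))$. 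Since $\varphi(t)\to\infty$ as $t\to1^+$ and as $t\to\infty$, this is a genuine minimum. Therefore for any $c>1/(2W(1/e))$ the choice $t_0=1/W(1/e)>1$ satisfies $c\ln t_0>(t_0+1)/2$. Since $t_0$ does not depend on $n$, the constant $C_{t_0}$ is harmless, and the lemma follows.
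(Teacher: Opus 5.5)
Your proof is correct and is essentially the paper's argument: the same Chernoff-type bound \eqref{s2} from the generating function of Proposition~\ref{p3}, evaluated at $x=1-1/n$. The only difference is the choice of $t$. The paper takes the $n$-dependent value $t=2c\ln n/\ln(2n-1)\to 2c$, which is optimal for the given $c$. You fix $t_0=1/W(1/e)$ instead, which spares you tracking the $n$-dependence of $t$ and shows the threshold directly as $\min_{t>1}(t+1)/(2\ln t)$; this is the same equation $s\ln s=s+1$ that the paper reaches through $\exp(1+1/(2c))/(2c)=1$ with $s=2c$.
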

\begin{proof}
Из предложений \ref{p2} и \ref{p3} несложно заметить, что для любых $n,m\in\mathbb{N}_0,$ а также $x\in(0,1),$ $t\ge 1$ выполняется
    \begin{gather}\label{s2}
    \sum_{i=m}^n \mu_P(\mathbb{YF}_{n,i})\le \frac{\exp{\left(\frac{t}{2}\cdot\ln{\frac{1+x}{1-x}}\right)}}{\sqrt{1-x^2}\cdot t^m \cdot x^n}.
    \end{gather}            

При каждом натуральном $n$ подставим в \eqref{s2} значения 
    \begin{gather*}
        x=1-\frac{1}{n},\qquad t=\frac{2c\cdot \ln{n}}{\ln{\frac{1+x}{1-x}}}=\frac{2c\cdot\ln{n}}{\ln{(2n-1)}},\qquad m=\lceil c\cdot\ln{n} \rceil.
    \end{gather*}
Очевидно, что при достаточно больших $n$ условие $t\ge 1$ выполняется. Таким образом, существует такая константа $C',$ что при всех натуральных $n\ge 2$ выполняется
\begin{align*}
    \sum_{i=\lceil c\cdot\ln{n} \rceil}^n \mu_P(\mathbb{YF}_{n,i})\le& \frac{\exp{(c\cdot \ln{n})}}{\sqrt{\frac{2}{n}-\frac{1}{n^2}}\cdot\left(\frac{2c\cdot\ln{n}}{\ln{(2n-1)}}\right)^{\lceil c\cdot\ln{n} \rceil} \cdot \left(1-\frac{1}{n}\right)^n}\\
    \le& C'\cdot\sqrt{n}\cdot \left(\frac{e}{2c}\right)^{c\cdot \ln{n}}= C'\cdot\left(\frac{\exp{\left(1+\frac{1}{2c}\right)}}{2c}\right)^{c\cdot\ln{n}}.
\end{align*}
Очевидно, на отрезке $(0,\infty)$ функция $\exp(1+1/(2c))/(2c)$ монотонно убывает, стремясь к нулю. Равна единице она как раз в точке $c=1/(2W(1/e)).$ Лемма очевидно следует.
\end{proof}

Теперь при больших $n$ оценим сверху число вершин на уровне $n,$ про которые мы доказали, что мера Планшереля сосредоточена на них:
\begin{Prop}\label{p4}
Пусть $c>1/(2W(1/e))\approx 1,79556.$ Тогда при достаточно больших $n$
    \begin{gather*}
        \sum_{i=0}^{\lceil c\cdot\ln{n} \rceil-1}|\mathbb{YF}_{n,i}|\le n^{c\cdot\ln{n}}.
    \end{gather*}        
\end{Prop}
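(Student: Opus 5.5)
Я бы начал с явного подсчёта $|\mathbb{YF}_{n,i}|$. Слово ранга $n$ ровно с $i$ единицами содержит $(n-i)/2$ двоек. Поэтому $|\mathbb{YF}_{n,i}|=0$, если $n-i$ нечётно. Если же $n-i$ чётно, то слово однозначно задаётся расстановкой $i$ единиц среди $i+(n-i)/2$ позиций, и
\begin{gather*}
|\mathbb{YF}_{n,i}|=\binom{(n+i)/2}{i}\le \binom{n}{i}\le \frac{n^i}{i!}\le n^i .
\end{gather*}
Здесь использовано, что $(n+i)/2\le n$ при $i\le n$.

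Положим $m=\lceil c\cdot\ln n\rceil$. Тогда для левой части предложения получаем
\begin{gather*}
\sum_{i=0}^{m-1}|\mathbb{YF}_{n,i}|\le \sum_{i=0}^{m-1} n^i=\frac{n^m-1}{n-1}<\frac{n^{m}}{n-1}.
\end{gather*}
Так как $m-1<c\ln n$, имеем $n^m=n\cdot n^{m-1}< n\cdot n^{c\ln n}$. Значит, сумма меньше $\frac{n}{n-1}\cdot n^{c\ln n}$.

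Множитель $n/(n-1)$ надо убрать, и это единственная тонкость. Проще всего использовать запас от $i!$, который мы выше отбросили: $|\mathbb{YF}_{n,i}|\le n^i/i!$. Старшее слагаемое с $i=m-1$ тогда не превосходит $n^{m-1}/(m-1)!\le n^{c\ln n}/(m-1)!$. Остальные слагаемые в сумме дают не больше $n^{m-1}\cdot\frac{1}{n-1}$, поскольку $\sum_{i\le m-2}n^i<n^{m-1}/(n-1)$. Итого
\begin{gather*}
\sum_{i=0}^{m-1}|\mathbb{YF}_{n,i}|\le n^{c\ln n}\left(\frac{1}{(m-1)!}+\frac{1}{n-1}\right).
\end{gather*}
При больших $n$ выражение в скобках меньше $1$, потому что $m\to\infty$. Это и даёт требуемое неравенство. Других препятствий нет: вся работа сводится к элементарной оценке биномиального коэффициента и геометрической суммы.
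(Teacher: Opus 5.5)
Your proof is correct and follows the same route as the paper: you count $|\mathbb{YF}_{n,i}|$ explicitly as the binomial coefficient $\binom{(n+i)/2}{i}$ and then bound it crudely by powers of $n$. The only difference is bookkeeping. The paper treats the parities of $n$ separately and absorbs the number of summands (at most $c\ln n$) using the saving $2^{-c\ln n}$, which comes from the upper index being about $n/2$. You handle both parities at once and absorb the extra factor via $1/(m-1)!$ on the top term and a geometric sum for the remaining terms.
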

    \begin{proof}
        Пусть $n$ чётно, $i\in\overline{n}.$ Несложно заметить, что если $i$ нечётно, то множество $\mathbb{YF}_{n,i}$ пусто, а если $i$ чётное и $i=2j,$ то выполняется $|\mathbb{YF}_{n,i}|=\binom{n/2+j}{2j},$ так как в $\mathbb{YF}_{n,i}$ находятся все вершины с $n/2+j$ цифрами, из которых $2j$ единиц. Таким образом, при достаточно больших $n$
    \begin{gather*}
        \sum_{i=0}^{\lceil c\cdot\ln{n} \rceil-1}|\mathbb{YF}_{n,i}|\le  \sum_{j=0}^{\left\lfloor\frac{ c\cdot\ln{n} }{2}\right\rfloor}\binom{\frac{n}{2}+j}{2j}\le c\cdot \ln{n}\cdot\left(\frac{n}{2}+\frac{c\cdot\ln{n}}{2}\right)^{c\cdot\ln{n}} \le n^{c\cdot\ln{n}}.
    \end{gather*}        
    Случай нечётного $n$ вполне аналогичен.
        \end{proof}

Из леммы \ref{l1} и предложения \ref{p4} сразу следует
\begin{theorem}\label{t2}
Пусть $c>1/(2W(1/e))\approx 1,79556.$ Тогда последовательность 
$\{f(n)\}_{n=1}^\infty={\exp {(c\cdot\ln^2{n})}},$ является покрывающей.
\end{theorem}

\section{Не покрывающие последовательности вида $\exp{(c\cdot\ln^2{n})}$}

В данном разделе нам понадобится следующее обозначение: 

Пусть $a\in(1,2),$ $n\in\mathbb{N}_0.$ Тогда 
    \begin{gather*}
        M_a(n):=\sum_{v\in\mathbb{YF}_n}\mu^a_P(v)=\sum_{v\in\mathbb{YF}_n}\frac{D^{2a}(v)}{(n!)^a}.
    \end{gather*}
\begin{Prop}\label{p5}
    Пусть $a\in(1,2).$ Тогда $M_a(0)=M_a(1)=1,$ при $n\ge 2$ выполняется
    \begin{gather*}
        M_a(n)=\frac{1}{n^a}\cdot M_a(n-1)+\frac{(n-1)^a}{n^a}\cdot M_a(n-2).
    \end{gather*}
\end{Prop}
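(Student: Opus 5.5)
Будем действовать так же, как в доказательстве предложения \ref{entrec}: разобьём множество $\mathbb{YF}_n$ по первой цифре слова и воспользуемся рекурсией \eqref{fd}. Начальные значения проверяются непосредственно. Множество $\mathbb{YF}_0$ состоит из одного пустого слова, для которого $D(\varepsilon)=1$ и $0!=1$, поэтому $M_a(0)=1$. Множество $\mathbb{YF}_1$ состоит из одного слова $1$, для которого $D(1)=D(\varepsilon)=1$ и $1!=1$, поэтому $M_a(1)=1$.

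При $n\ge 2$ каждое слово $w\in\mathbb{YF}_n$ однозначно записывается либо как $w=1v$ с $v\in\mathbb{YF}_{n-1}$, либо как $w=2v$ с $v\in\mathbb{YF}_{n-2}$. Следовательно,
\begin{gather*}
M_a(n)=\sum_{v\in\mathbb{YF}_{n-1}}\frac{D^{2a}(1v)}{(n!)^a}+\sum_{v\in\mathbb{YF}_{n-2}}\frac{D^{2a}(2v)}{(n!)^a}.
\end{gather*}
В первой сумме подставим $D(1v)=D(v)$ и запишем $(n!)^a=n^a\cdot((n-1)!)^a$. Получится слагаемое $\frac{1}{n^a}\cdot M_a(n-1)$. Во второй сумме $|v|=n-2$, поэтому по \eqref{fd} имеем $D(2v)=(n-1)\cdot D(v)$, откуда $D^{2a}(2v)=(n-1)^{2a}D^{2a}(v)$. Кроме того, $(n!)^a=n^a(n-1)^a((n-2)!)^a$. После сокращения получаем
\begin{gather*}
\frac{(n-1)^{2a}}{n^a(n-1)^a}\cdot M_a(n-2)=\frac{(n-1)^a}{n^a}\cdot M_a(n-2).
\end{gather*}
Сложив оба слагаемых, получаем требуемую формулу.

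Серьёзных препятствий здесь нет. Единственное место, где нужна аккуратность, — правильно подставить ранг $|v|=n-2$ в множитель $|v|+1$ из формулы \eqref{fd} и верно разложить $(n!)^a$ на множители. Условие $a\in(1,2)$ в самом вычислении не используется: рекурсия верна при любом $a$. Это условие понадобится лишь в дальнейших оценках величины $M_a(n)$.
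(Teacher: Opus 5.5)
Your proof is correct and follows the paper's own argument: you split $\mathbb{YF}_n$ by the first digit, use $D(1v)=D(v)$ and $D(2v)=(n-1)D(v)$ for $|v|=n-2$, and factor $(n!)^a$ into $n^a(n-1)^a((n-2)!)^a$. Your remark that the restriction $a\in(1,2)$ is not needed for the recursion itself is also accurate.
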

\begin{proof}
Равенство $M_a(0)=M_a(1)=1$ понятно. При $n\ge 2$ воспользуемся \eqref{fd}:
\begin{align*}
M_a(n)=&\sum_{v\in\mathbb{YF}_{n-1}}\frac{D^{2a}(1v)}{(n!)^a}+\sum_{v\in\mathbb{YF}_{n-2}}\frac{D^{2a}(2v)}{(n!)^a}\\
=&\frac{1}{n^a}\cdot\sum_{v\in\mathbb{YF}_{n-1}}\frac{D^{2a}(v)}{((n-1)!)^a}+\frac{(n-1)^a}{n^a}\cdot\sum_{v\in\mathbb{YF}_{n-2}}\frac{D^{2a}(v)}{((n-2)!)^a}.
\end{align*}
Предложение следует.
\end{proof}

\begin{Lemma}\label{l2}
    Пусть $a\in(1,2),$ $n\in\mathbb{N}_0.$ Тогда 
    \begin{gather*} 
    M_a(n)\le\frac{\exp{\frac{a}{a-1}}}{n^{a/2}}.
    \end{gather*}    
\end{Lemma}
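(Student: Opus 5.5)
The plan is to prove by induction on $n$ a bound for the normalized quantity
\begin{gather*}
b_n:=n^{a/2}\cdot M_a(n),
\end{gather*}
using Proposition \ref{p5}. I aim to show that $b_n\le\max(b_{n-1},b_{n-2})\cdot(1+c_n)$ with $\sum c_n$ under control. Then $b_n\le \max(b_1,b_2)\cdot\exp\bigl(\sum_{k\le n}c_k\bigr)$, and it remains to check that this exponent is at most $a/(a-1)$.

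\textbf{Step 1 (rewriting the recursion).} For $n\ge 3$, substitute $M_a(n-1)=b_{n-1}(n-1)^{-a/2}$ and $M_a(n-2)=b_{n-2}(n-2)^{-a/2}$ into Proposition \ref{p5}. This gives
\begin{gather*}
b_n=\frac{b_{n-1}}{n^{a/2}(n-1)^{a/2}}+b_{n-2}\cdot\left(\frac{(n-1)^2}{n(n-2)}\right)^{a/2}
=\frac{b_{n-1}}{n^{a/2}(n-1)^{a/2}}+b_{n-2}\cdot\left(1+\frac{1}{n(n-2)}\right)^{a/2}.
\end{gather*}
I then bound the two coefficients separately. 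The first satisfies $n^{-a/2}(n-1)^{-a/2}\le (n-1)^{-a}$. For the second, since $a/2<1$, Bernoulli's inequality gives $(1+1/(n(n-2)))^{a/2}\le 1+\frac{a}{2n(n-2)}$. Hence
\begin{gather*}
b_n\le\max(b_{n-1},b_{n-2})\cdot(1+c_n),\qquad c_n:=(n-1)^{-a}+\frac{a}{2n(n-2)}.
\end{gather*}

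\textbf{Step 2 (base cases).} We have $b_0=0$ and $b_1=1$. For $n=2$, $M_a(2)=2\cdot 2^{-a}$, so $b_2=2^{1-a/2}$. Then an induction over $n\ge3$, with $1+c\le e^{c}$, gives
\begin{gather*}
b_n\le 2^{1-a/2}\exp\Bigl(\sum_{n\ge3}c_n\Bigr).
\end{gather*}

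\textbf{Step 3 (summing the $c_n$).} For the first part, comparison with an integral gives $\sum_{n\ge3}(n-1)^{-a}=\sum_{k\ge2}k^{-a}\le\int_1^\infty x^{-a}\,dx=\frac{1}{a-1}$. For the second part, the telescoping sum $\sum_{n\ge3}\frac{1}{n(n-2)}=\frac{1}{2}\left(1+\frac12\right)=\frac34$ gives the contribution $\frac{3a}{8}$. Therefore
\begin{gather*}
\ln b_n\le \left(1-\frac a2\right)\ln 2+\frac{3a}{8}+\frac{1}{a-1}.
\end{gather*}
It remains to verify $\left(1-\frac a2\right)\ln2+\frac{3a}{8}\le 1$ on $(1,2)$. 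The left side is linear in $a$, with value about $0.72$ at $a=1$ and $0.75$ at $a=2$. So $\ln b_n\le 1+\frac1{a-1}=\frac{a}{a-1}$, which is exactly the claim. For $n\in\{0,1,2\}$ the claim is immediate from the values computed in Step 2.

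The only delicate point is obtaining the exact constant $e^{a/(a-1)}$ rather than some constant $C(a)$. This is why $n=2$ is treated separately: including the term $k=1$ in $\sum k^{-a}$ would cost an extra $1$ in the exponent. The crude estimates in Step 1 leave enough slack, as the numerical check in Step 3 shows.
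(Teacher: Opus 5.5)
Your proof is correct and follows essentially the paper's route: an induction through the recursion for $M_a(n)$ on a normalized quantity, with Bernoulli's inequality and the integral bound $\sum_{m\ge 2} m^{-a}\le 1/(a-1)$. The only difference is the normalization: the paper multiplies by $(n+1)^{a/2}$ rather than $n^{a/2}$, so the coefficient of $M_a(n-2)$ becomes $(1-1/n^2)^{a/2}<1$ and each step costs at most a factor $1+n^{-a}$. This removes your extra series $3a/8$, the separate treatment of $n=2$, and the numerical check, since the constant is directly $\sum_{m\ge1}m^{-a}\le a/(a-1)$.
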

\begin{proof}
    Для начала докажем, что при всех $a\in(1,2),$ $n\in\mathbb{N}_0$ выполняется 
    \begin{gather}\label{p6} 
    M_a(n)\le\frac{\prod_{m=1}^n(1+{1}/{m^a})}{(n+1)^{a/2}}.
    \end{gather}    
    При фиксированном $a\in(1,2)$ докажем \eqref{p6} индукцией по $n.$ База $n\in\{0,1\}$ очевидна. Докажем переход к $n\ge 2.$

    Применим предложение \ref{p5}, предположение индукции и обобщённое неравенство Бернулли:
    \begin{align*}      
    M_a(n)=&\frac{1}{n^a}\cdot M_a(n-1)+\frac{(n-1)^a}{n^a}\cdot M_a(n-2)\\
\le&\frac{1}{n^a}\cdot\frac{\prod_{m=1}^{n-1}(1+1/m^a)}{n^{a/2}}+\frac{(n-1)^a}{n^a}\cdot\frac{\prod_{m=1}^{n-2}(1+1/m^a)}{(n-1)^{a/2}}\\
\le& \frac{\prod_{m=1}^{n-1}(1+1/m^a)}{(n+1)^{a/2}}\cdot\left( \frac{1}{n^a}\cdot\frac{(n+1)^{a/2}}{n^{a/2}}+\frac{(n^2-1)^{a/2}}{n^a}\right)\\
\le& \frac{\prod_{m=1}^{n-1}(1+1/m^a)}{(n+1)^{a/2}}\cdot\left(\frac{1}{n^a}\cdot\left(1+\frac{a}{2n}\right)+\left(1-\frac{a}{2n^2}\right)\right).
\end{align*}    
Переход следует.

Теперь применим \eqref{p6} для доказательства леммы:
\begin{gather*}
    M_a(n)\le \frac{\prod_{m=1}^n(1+1/m^a)}{(n+1)^{a/2}} 
    \le\frac{\exp{{\sum_{m=1}^\infty {1/m^a}}}}{n^{a/2}}\le \frac{\exp{\left(1+\int_{1}^\infty{dm}/{m^a}\right)}}{n^{a/2}}=\frac{\exp{\frac{a}{a-1}}}{n^{a/2}}.
\end{gather*}    
\end{proof}

\begin{theorem}\label{t3}
Последовательность $\{f(n)\}_{n=1}^\infty={\exp {(1/16\cdot\ln^2{n})}},$ не является покрывающей.
\end{theorem}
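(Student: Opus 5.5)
Будем использовать лемму \ref{l2} с параметром $a$, зависящим от $n$, и связь между $M_a(n)$ и массой любого множества из $\lfloor f(n)\rfloor$ вершин. Пусть $A\subseteq\mathbb{YF}_n$ --- произвольное множество, $|A|\le N:=\lfloor f(n)\rfloor$. По неравенству Гёльдера с показателями $a$ и $a/(a-1)$ имеем
\begin{gather*}
\mu_P(A)=\sum_{v\in A}\mu_P(v)\le |A|^{(a-1)/a}\cdot\Bigl(\sum_{v\in A}\mu_P^a(v)\Bigr)^{1/a}\le N^{(a-1)/a}\cdot M_a(n)^{1/a}.
\end{gather*}
Применяя лемму \ref{l2}, получаем
\begin{gather*}
\mu_P(A)\le \exp\Bigl(\frac{a-1}{a}\ln N+\frac{1}{a-1}-\frac{1}{2}\ln n\Bigr).
\end{gather*}
Достаточно показать, что при подходящем выборе $a=a(n)\in(1,2)$ показатель экспоненты при больших $n$ ограничен сверху отрицательной константой (и даже стремится к $-\infty$). Тогда $\mu_P(A_f(n))\to0$, и $\varlimsup\mu_P(A_f(n))=0<1$.

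Положим $\ln N\le \frac{1}{16}\ln^2 n$ и $a=1+\delta$, где $\delta\in(0,1)$. Так как $(a-1)/a\le\delta$, показатель не превосходит
\begin{gather*}
\frac{\delta}{16}\ln^2 n+\frac{1}{\delta}-\frac12\ln n.
\end{gather*}
Минимизируя по $\delta$, выберем $\delta=4/\ln n$ (при больших $n$ оно лежит в $(0,1)$). Тогда первые два слагаемых дают $\frac14\ln n+\frac14\ln n=\frac12\ln n$, и показатель не превосходит $0$. Это даёт лишь оценку $\mu_P(A)\le1$, то есть константа $1/16$ оказывается ровно пограничной для грубой оценки $(a-1)/a\le\delta$. Поэтому главный шаг --- аккуратно учесть $(a-1)/a=\delta/(1+\delta)=\delta-\delta^2+O(\delta^3)$. С $\delta=4/\ln n$ это даёт выигрыш $\frac{\delta^2}{16}\ln^2 n\approx 1$ в первом слагаемом. Точнее, показатель равен
\begin{gather*}
\frac{\delta}{1+\delta}\cdot\frac{\ln^2 n}{16}+\frac1\delta-\frac12\ln n=-\frac{\delta^2}{1+\delta}\cdot\frac{\ln^2n}{16}=-\frac{1}{1+4/\ln n}\to-1.
\end{gather*}
Итак, $\varlimsup\mu_P(A_f(n))\le e^{-1}<1$, что и требуется. Впрочем, можно было бы и чуть иначе выбрать $\delta$, чтобы получить строгий запас.

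Основной риск --- именно эта пограничность: нужно проверить, что константа $\exp(a/(a-1))$ из леммы \ref{l2} учтена точно. Действительно, $\frac1a\cdot\frac{a}{a-1}=\frac1\delta$, и замена $\ln N$ на $\frac1{16}\ln^2 n$ даёт только неравенство в нужную сторону. Кроме того, при $|A|<N$ (например, если $N>|\mathbb{YF}_n|$) оценка через $N$ лишь ослабляется, так что этот случай не требует отдельного рассмотрения.
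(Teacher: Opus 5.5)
Your proof is correct and is essentially the paper's own argument: your Hölder step is the same inequality $\mu_P(A)^a\le |A|^{a-1}\sum_{v\in A}\mu_P^a(v)$ that the paper gets from Jensen. Combined with Lemma 2 and the same choice $a=1+4/\ln n$ (for $n\ge 55$), it gives exactly the paper's bound $\mu_P(A_f(n))\le\exp\bigl(-\ln n/(\ln n+4)\bigr)\to e^{-1}$.

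The only slip is the early remark in parentheses that the exponent can be driven to $-\infty$ (so that $\mu_P(A_f(n))\to 0$). With $u=(a-1)/a$ and $L=\ln n$, the exponent equals $uL^2/16+1/u-1-L/2\ge -1$, so this method cannot give better than $e^{-1}$ at the constant $1/16$. That is harmless, since $\varlimsup\mu_P(A_f(n))\le e^{-1}<1$ is all the theorem requires.
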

\begin{proof}
Заметим, что при любых $a\in(1,2),$ $n\in\mathbb{N}_0,$ в силу леммы \ref{l2} и неравенства Йенсена для выпуклых вниз функций, выполняется 
    \begin{gather*}
        \frac{\exp{\frac{a}{a-1}}}{n^{a/2}}\ge M_a(n)\ge\sum_{v\in A_f(n)}\mu_P^a(v)\ge\left(\frac{\mu_P(A_f(n))}{|A_f(n)|}\right)^a\cdot {|A_f(n)|}\ge \frac{\mu_P^a(A_f(n))}{\exp{(\frac{1}{16}\ln^2{n}\cdot(a-1) )}},
    \end{gather*}
а значит, выполняется
\begin{gather}\label{final}
    \mu_P(A_f(n))\le \frac{\exp{\frac{1}{a-1}}}{\exp{\left(\frac{1}{2}\ln{n}\right)}}\cdot \exp{\left(\frac{1}{16}\ln^2{n}\cdot\frac{a-1}{a}\right)}.
\end{gather}

При каждом натуральном $n\ge 55$ подставим в \eqref{final} значения $a=1+4/\ln{n}:$
\begin{gather*}
    \mu_P(A_f(n))\le \frac{\exp{(\frac{1}{4}\ln{n})}}{\exp{\left(\frac{1}{2}\ln{n}\right)}}\cdot \exp{\frac{\frac{1}{4}\ln^2{n}}{\ln{n}+4}}=\exp{\left(-\frac{\ln{n}}{\ln{n}+4}\right)}.
\end{gather*}
Из данного неравенства ясно, что при достаточно больших $n$ значение $\mu_P(A_f(n))$ меньше, например, чем $1/2.$ Теорема очевидно следует.
\end{proof}

\section{Благодарности}
Я очень признателен моему научному руководителю Фёдору Владимировичу Петрову за постановку задачи, проявленное терпение при проверке доказательства и редактировании статьи, а также за моральную поддержку на протяжении всего периода работы. 

Работа выполнена при поддержке СПбГУ, шифр проекта 159547043.

\selectlanguage{english}

\title{On how many Fibonacci words is the Plancherel measure concentrated}

\author{V.~Yu.~Evtushevsky}

\maketitle

\setcounter{page}{1}

\etocsettagdepth{en}{all}
\etocsettagdepth{ru}{none}

\tableofcontents

\theoremstyle{style2}
\newtheorem{enLemma}{Lemma}

\theoremstyle{style2}
\newtheorem{enAlg}{Algorithm}

\theoremstyle{style2}
\newtheorem{enCol}{Corollary}

\theoremstyle{style2}
\newtheorem{entheorem}{Theorem}

\theoremstyle{style2}
\newtheorem{enDef}{Definition}

\theoremstyle{style2}
\newtheorem{enProp}{Proposition}

\theoremstyle{style2}
\newtheorem{enProblem}{Problem}

\theoremstyle{style2}
\newtheorem{enZam}{Remark}

\theoremstyle{style2}
\newtheorem{enOboz}{Notation}

\theoremstyle{style2}
\newtheorem{enEx}{Example}

\theoremstyle{style2}
\newtheorem{enNab}{Observation}

\setcounter{section}{0}
\setcounter{figure}{0} 
\setcounter{equation}{0}

\begin{abstract}
On the set $\mathbb{YF}_n$ of words in $1$ and $2$ 
with digit sum $n$, we consider the probability distribution
$\mu_P$,
corresponding to the Plancherel measure on paths in the
Young--Fibonacci graph. We study the asymptotics as $n$ grows of the entropy
 of the distribution $\mu_P$ and of the minimum number of
words on which $\mu_P$ is almost entirely concentrated. 
\end{abstract}

\etocdepthtag.toc{en}
\section{Introduction}
Consider words over the alphabet $\{1,2\}$ with a given digit sum $n.$ As is well known, their number is the Fibonacci number $F_{n+1}$ ($F_0=0,$ $F_1=1,$ $F_{k+2}=F_{k+1}+F_k$), and this is the most common combinatorial interpretation of Fibonacci numbers. One can also think of tilings of a $2\times n$ strip by dominoes of sizes $1\times 2$ and $2\times 1,$ associating twos with pairs of horizontal dominoes and ones with vertical dominoes.

Introduce a partial order on this set of words: we say that a word $v$ \emph{precedes} a word $w$ if, after removing the longest common suffix, the word $w$ has at least as many twos remaining as the word $v$ has digits remaining. 

This is indeed a partial order; moreover, the corresponding partially ordered set is a modular lattice, known as the Young--Fibonacci lattice.

The Young--Fibonacci graph (shown in Fig. \ref{enyf}) is the Hasse diagram of this lattice. It is a graded graph, which we represent as growing from bottom to top, starting with the empty word. The grading is given by the digit sum function. Let us describe explicitly how the directed edges are defined. The ``upward'' edges from a given word $v$ lead to words obtained from $v$ by one of two operations:
\begin{enumerate}
    \item replace the leftmost one by a two;
    \item insert a one at a position
    with no ones to its left.
\end{enumerate}

\begin{figure}[htbp]
    \centering
    \includegraphics[width=12cm, height=10cm]{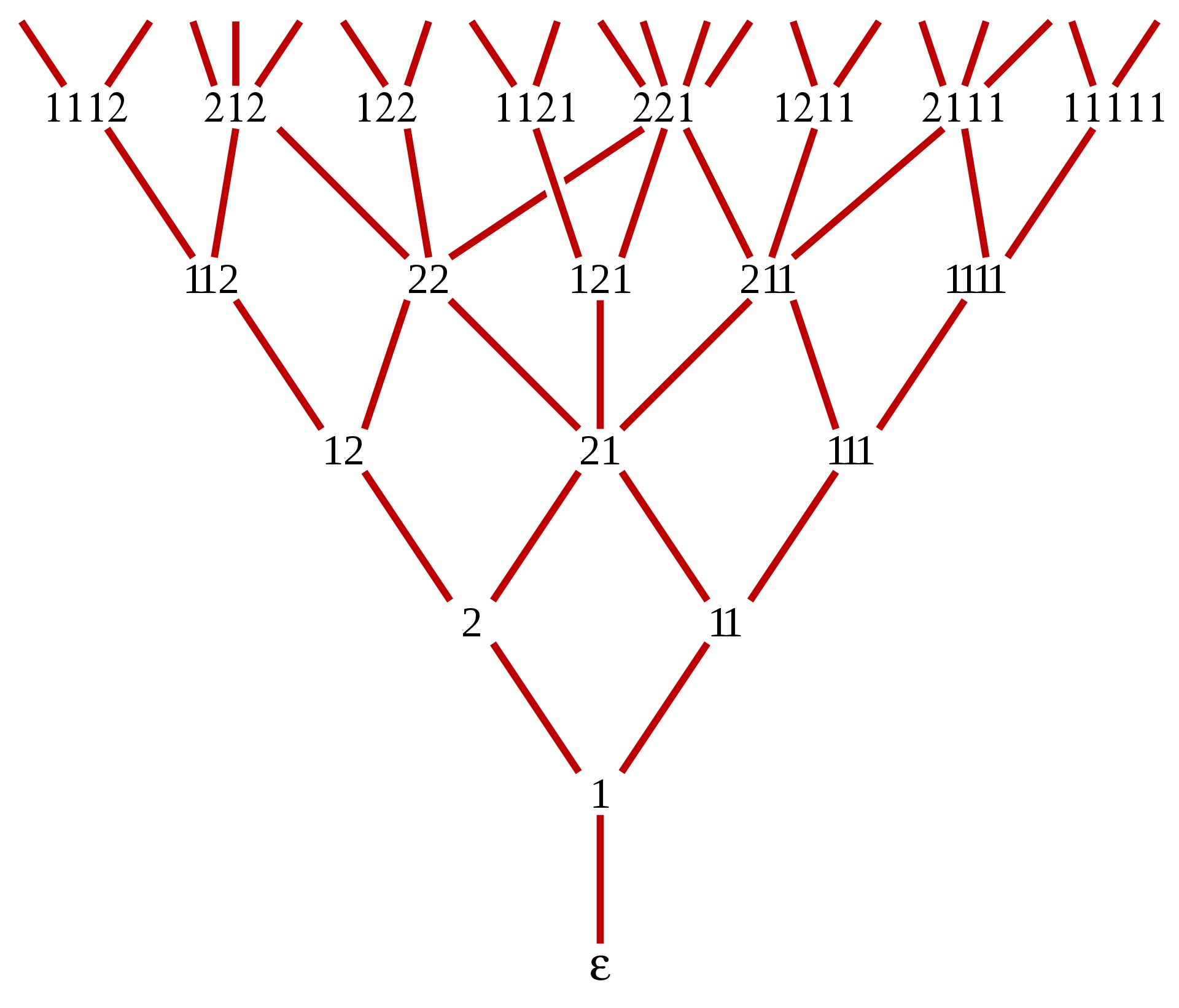}
    \caption{The Young--Fibonacci graph}
    \label{enyf}
\end{figure}

In this paper, we regard the Young--Fibonacci graph as directed and containing only ``upward'' edges.

In addition to being modular, this graph is $1$-differential, that is, for each vertex, the out-degree exceeds the in-degree by $1$.

The study of the graded Young--Fibonacci graph was initiated in 1988 simultaneously and independently by the mathematicians Richard Stanley \cite{enSt} and Sergei Vladimirovich Fomin \cite{enFo}.

One reason for the interest in it is that there are only two rooted locally finite $1$-differential modular lattices; the other is the lattice of Young diagrams, which plays a key role in the representation theory of the symmetric group \cite[Theorem~1.3]{enBur}. 

The key questions about graded graphs concern central measures on the space of (infinite) paths in the graph. This point of view was systematically developed in the works of Anatoly Moiseevich Vershik; we refer the reader to his recent survey \cite{enVer} and the references therein.

Among central measures, one singles out those that are limits of measures induced by paths to distant vertices---the so-called \emph{Martin boundary} of the graph.

The boundary of the path space of the Young--Fibonacci graph was studied in the paper by Frederick Goodman and Sergei Vasilievich Kerov (2000) \cite{enGK}. They used Okada's algebraic formalism \cite{enOkada}.

Goodman and Kerov proved that the list of central measures of interest to us is exhausted by the following measures:
\begin{enumerate}
    \item The Plancherel measure: for a given vertex $v$ of rank $n,$ the measure of the set of paths passing through this vertex is ${D(v)^2}/{n!}$, where $D(v)$ is the number of ``upward'' paths from the root to the vertex $v$.
    \item The measures $\mu_{w,\beta},$ parametrized by an infinite word $w,$ containing ``sufficiently few'' twos, and a number $\beta\in(0,1]$.
\end{enumerate}

This paper is devoted to the Plancherel measure. This measure was studied, in particular, by S.~V.~Kerov and Alexander Vasilievich Gnedin \cite{enKerGned}, who proved
its ergodicity. In \cite{enBE}, Ivan Alekseevich Bochkov
and the author obtained explicit formulas for the remaining measures and proved their
ergodicity.

For each $n$,
the Plancherel measure defines a probability measure on the set $\mathbb{YF}_n$ 
of vertices of the Young--Fibonacci graph of rank $n.$ This paper
studies precisely this sequence of probability measures.

In Section 3, we compute the asymptotics (up to $O(1)$) of the entropies of these measures. In Section 4, we prove that there exists a constant $c'\approx 1,79556,$ such that, for all $c>c'$ and large $n$, the Plancherel measure is \emph{concentrated} on $\lfloor\exp{(c\cdot\ln^2{n})}\rfloor$ vertices of the Young--Fibonacci graph of rank $n.$ In Section 5, we prove that, for large $n$, the Plancherel measure
\emph{is not concentrated} on any $\lfloor\exp{({1}/{16}\cdot\ln^2{n})}\rfloor$ vertices of the Young--Fibonacci graph of rank $n.$

Note that the entropy of the Plancherel measure of the Young graph
(defined on the set $\mathbb{Y}_n$ of Young diagrams with $n$
cells) grows considerably
faster---as $\Theta(\sqrt{n})$ \cite{enVK}, so it is not concentrated
on fewer than $\exp({\rm const}\sqrt{n})$ vertices. This is all the
more surprising because the \emph{total} number of vertices of rank $n$
in the Young--Fibonacci graph (that is, the Fibonacci number $F_{n+1}$) grows, on the contrary, considerably faster than the total number of vertices of rank $n$ in the Young graph (that is, the number of partitions
$p(n)$ of the number $n$).

    \section{Basic concepts and notation}

Let us introduce the basic concepts and notation.

The sets of nonnegative integers and positive integers (natural numbers) are denoted by $\mathbb{N}_0$ and $\mathbb{N}$, respectively. 
For $m,n\in \mathbb{N}_0:$ $m\leqslant n$, put
\begin{gather*}
	\overline{m,n}:=\{m,m+1,\ldots,n\},\quad \overline{n}:=\overline{0,n}=\{0,1,\ldots,n\}.
\end{gather*}

Let $\mathbb{YF}$ be the Young--Fibonacci graph; its vertices are finite words in $1$ and $2.$ In what follows, $\mathbb{YF}$, as a set, is precisely the set of vertices of the Young--Fibonacci graph. For $v,w\in\mathbb{YF}$, the word $vw$ is defined as the concatenation of $v$ and $w.$ 

For $v \in \mathbb{YF}$, denote the digit sum of the word $v$ by $|v|$. The number $|v|$ is the \emph{rank} of the vertex $v$ in the graph $\mathbb{YF}$ (we assume that the empty word has rank $0$ and that moving upward along an edge increases the rank by $1$). 

For a nonnegative integer $n$, denote the set of vertices of the graph $\mathbb{YF}$ of rank $n$ by
\begin{gather*}
	\mathbb{YF}_n:=\{v\in\mathbb{YF}:\;|v|=n\}.
\end{gather*}	
In this paper, we regard the graph $\mathbb{YF}$ as directed, with edges from vertices of smaller rank to vertices of larger rank.

For $v\in \mathbb{YF}$, denote the number of paths from $\varepsilon$ to $v$ by $D(v).$ It is well known that
\begin{gather} \label{enfd}
    D(\varepsilon)=1,\qquad D(1v)= D(v),\qquad D(2v)=(|v|+1)\cdot D(v).   
\end{gather}
For a proof, see, for example, the paper by Gnedin and Kerov \cite[formula~3.4]{enKerGned}. 

In particular, it follows directly from the recursive formula for $D(v)$ that, for all $n\in\mathbb{N}_0$,
\begin{gather}\label{enfactorial}
    \sum_{v\in\mathbb{YF}_n} D^2(v)=n!.
\end{gather}

The \emph{Plancherel measure} $\mu_P$ of the Young--Fibonacci graph is a probability measure on the space of infinite paths from the root such that, for any finite path $t=(\varepsilon,\ldots,v),$ where $v\in\mathbb{YF}_n,$ the measure of the set of all infinite paths beginning with $t,$ is $D(v)/n!.$ Denote by $\mu_P(v)=D^2(v)/n!$ the measure of all paths passing through $v.$

Thus, for a given $n\in \mathbb{N}_0$, the function $\mu_P(v)$ defines a probability measure on the set $\mathbb{YF}_n.$ Expressions such as $\mu_P(A)$ for subsets $A\subseteq\mathbb{YF}_n$ are understood in this sense.

Denote the \emph{entropy} of this probability measure on the set $\mathbb{YF}_n$ by $H_n.$ That is,
\begin{gather*}
    H_n:=-\sum_{v\in\mathbb{YF}_n}\mu_P(v)\cdot \ln{\mu_P(v)}= \sum_{v\in\mathbb{YF}_n}\frac{D^2(v)}{n!}\cdot(\ln{n!}-2\ln{D(v)}).
\end{gather*}

Also, let $\{f(n)\}_{n=1}^\infty$ be a sequence of nonnegative numbers. Then, for a fixed $n$, denote by $A_{f}(n)$ a set of $\lfloor f(n)\rfloor$ vertices of the Young--Fibonacci graph of rank $n$ with the largest values of the Plancherel measure (in general, the set $A_f(n)$ is not uniquely defined; in the case of ambiguity, we fix an arbitrary choice). If $\lfloor f(n)\rfloor > |\mathbb{YF}_n|,$ we assume that $A_{f}(n)=\mathbb{YF}_n.$ In addition, let $\overline{A_{f}}(n)=\mathbb{YF}_n\setminus A_{f}(n).$ We also call the sequence $f$ \emph{covering} if $\lim_{n\to\infty}\mu_{P}(A_f(n))=1$ (or, equivalently, $\lim_{n\to\infty}\mu_{P}(\overline{A_f}(n))=0)$ and \emph{non-covering} if $\varlimsup_{n\to\infty}\mu_{P}(A_f(n))<1.$

\section{Entropy of the sequence of measures}

We first prove a recursive formula for the entropies of the probability measures defined by the Plancherel measure on the sets $\mathbb{YF}_n:$ 
\begin{enProp}\label{enentrec}
$H_0=H_1=0,$ and, for $n\ge 2$, we have
\begin{gather*}\label{enentrec1}
H_n=\frac{1}{n}\cdot H_{n-1}+\frac{n-1}{n}\cdot H_{n-2}+\ln{n}-\frac{n-1}{n}\cdot \ln{(n-1)}.    
\end{gather*}    
\end{enProp}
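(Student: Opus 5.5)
The plan is to split $\mathbb{YF}_n$ by the first letter of the word and use the recursion \eqref{enfd}. Trivially $H_0=H_1=0$, since $\mathbb{YF}_0$ and $\mathbb{YF}_1$ each consist of a single word, which carries measure $1$. For $n\ge 2$ every word of rank $n$ is uniquely either $1v$ with $v\in\mathbb{YF}_{n-1}$ or $2v$ with $v\in\mathbb{YF}_{n-2}$. So I will write
\begin{gather*}
H_n=\sum_{v\in\mathbb{YF}_{n-1}}\frac{D^2(1v)}{n!}\bigl(\ln n!-2\ln D(1v)\bigr)+\sum_{v\in\mathbb{YF}_{n-2}}\frac{D^2(2v)}{n!}\bigl(\ln n!-2\ln D(2v)\bigr).
\end{gather*}

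Next I substitute $D(1v)=D(v)$ and $D(2v)=(n-1)D(v)$, the latter because $|v|=n-2$.

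In the first sum I factor out $1/n$ to get weights $D^2(v)/(n-1)!$. The bracket becomes $\ln(n-1)!-2\ln D(v)+\ln n$.

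In the second sum I have $D^2(2v)/n!=\frac{n-1}{n}\cdot D^2(v)/(n-2)!$. The bracket becomes
\begin{gather*}
\ln(n-2)!+\ln(n-1)+\ln n-2\ln(n-1)-2\ln D(v)=\ln(n-2)!-2\ln D(v)+\ln n-\ln(n-1).
\end{gather*}

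Then I apply \eqref{enfactorial} at levels $n-1$ and $n-2$: the weights $D^2(v)/(n-1)!$ and $D^2(v)/(n-2)!$ each sum to $1$. Hence the parts of the brackets that do not depend on $v$ contribute
\begin{gather*}
\frac{1}{n}\ln n+\frac{n-1}{n}\bigl(\ln n-\ln(n-1)\bigr)=\ln n-\frac{n-1}{n}\ln(n-1),
\end{gather*}
and the remaining parts give exactly $\frac1n H_{n-1}+\frac{n-1}{n}H_{n-2}$, which is the claimed recursion.

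There is no real obstacle here. The only point needing care is bookkeeping of the logarithms: the factor $(n-1)^2$ coming from $D^2(2v)$ must be split between the weight, where it turns $1/n!$ into $\frac{n-1}{n}\cdot\frac{1}{(n-2)!}$, and the $\log$ term, where it contributes $-2\ln(n-1)$.
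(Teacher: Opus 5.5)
Your proposal is correct and follows the paper's proof exactly: both split $\mathbb{YF}_n$ by the first letter, substitute $D(1v)=D(v)$ and $D(2v)=(n-1)D(v)$, and use $\sum_{v\in\mathbb{YF}_k}D^2(v)=k!$ at levels $n-1$ and $n-2$. Your bookkeeping of the weights $\frac{1}{n}$, $\frac{n-1}{n}$ and of the constant logarithmic terms is also right.
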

\begin{proof}
The equality $H_0=H_1=0$ is clear. For $n\ge 2$, we use \eqref{enfd}:
\begin{align*}
H_n=&\sum_{v\in\mathbb{YF}_{n-1}}\frac{D^2(1v)}{n!}\cdot(\ln{n!}-2\ln{D(1v)})+\sum_{v\in\mathbb{YF}_{n-2}}\frac{D^2(2v)}{n!}\cdot(\ln{n!}-2\ln{D(2v)})\\
=&\frac{1}{n}\cdot\sum_{v\in\mathbb{YF}_{n-1}}\frac{D^2(v)}{(n-1)!}\cdot(\ln{(n-1)!}-2\ln{D(v)}+\ln{n})\\
&+\frac{n-1}{n}\cdot\sum_{v\in\mathbb{YF}_{n-2}}\frac{D^2(v)}{(n-2)!}\cdot(\ln{(n-2)!}-2\ln{D(v)}+\ln{n}-\ln{(n-1)}).
\end{align*}
The proposition now follows from \eqref{enfactorial}.
\end{proof}

It follows directly from the Mercator series expansion of the logarithm $\ln{(1-c/n)}$ that, for $n\ge 3$, we have
\begin{align}
    \ln{(n-1)}=\ln{n}-\frac{1}{n}+O\left(\frac{1}{n^2}\right),&\qquad \ln{(n-2)}=\ln{n}-\frac{2}{n}+O\left(\frac{1}{n^2}\right);\notag\\
    \ln^2{(n-1)}=\ln^2{n}-\frac{2\ln{n}}{n}+O\left(\frac{\ln{n}}{n^2}\right),&\qquad \ln^2{(n-2)}=\ln^2{n}-\frac{4\ln{n}}{n}+O\left(\frac{\ln{n}}{n^2}\right).\label{enf1}    
\end{align} 

We are ready to compute, up to $O(1)$, the asymptotics of the entropies of the probability measures defined by the Plancherel measure on the sets $\mathbb{YF}_n:$ 
\begin{entheorem}\label{ent1}
The following asymptotic relation holds:
    \begin{gather*}\label{ent11}
    H_n=\frac{1}{4}\ln^2{n}+\frac{1}{2}\ln{n}+O(1).
    \end{gather*}
\end{entheorem}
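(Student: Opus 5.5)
The plan is to use the recursion of Proposition \ref{enentrec} and show that the correction term is almost a fixed point of it. For each natural $n$ put $h_n:=H_n-\frac14\ln^2 n-\frac12\ln n$. The goal is an inequality of the form
\begin{gather*}
|h_n|\le \max(|h_{n-1}|,|h_{n-2}|)+O\!\left(\frac{\ln n}{n^2}\right),\qquad n\ge 3 .
\end{gather*}
Once we have this, induction gives $\max_{k\le n}|h_k|\le \max(|h_1|,|h_2|)+C\sum_{k\ge 3}\ln k/k^2$. This is bounded because the series converges, so $h_n=O(1)$, which is the theorem.

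To derive the inequality, we substitute $H_{n-1}=\frac14\ln^2(n-1)+\frac12\ln(n-1)+h_{n-1}$ and the analogous expression for $H_{n-2}$ into the recursion. The $h$-terms appear as $\frac1n h_{n-1}+\frac{n-1}{n}h_{n-2}$. Since these two coefficients are nonnegative and sum to $1$, this is a convex combination, and its absolute value is at most $\max(|h_{n-1}|,|h_{n-2}|)$. Everything else is a deterministic expression in $n$. We expand it using the formulas \eqref{enf1}, with $\ln(n-1)$, $\ln(n-2)$, $\ln^2(n-1)$, $\ln^2(n-2)$ all written in terms of $\ln n$ and $\ln^2 n$.

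The main step, and the only real obstacle, is checking that all terms of order $\ln^2 n$, $\ln n$, $\ln n/n$ and $1/n$ cancel, leaving only $O(\ln n/n^2)$.
\begin{itemize}
\item The $\ln^2 n$ terms cancel because $\frac1n+\frac{n-1}{n}=1$.
\item For the $\ln n/n$ terms, the $\frac14\ln^2$ pieces contribute $-\frac14\cdot\frac{2}{n}\cdot\frac1n\ln n-\frac14\cdot\frac{4}{n}\cdot\frac{n-1}{n}\ln n=-\frac{\ln n}{n}+O(\ln n/n^2)$. The explicit term $\ln n-\frac{n-1}{n}\ln(n-1)$ together with the $\frac12\ln$ pieces contributes $+\frac{\ln n}{n}+O(1/n)$-type terms. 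So the $\frac14\ln^2$ coefficient is exactly what makes the $\ln n/n$ terms cancel.
\item For the $\ln n$ and $1/n$ terms, the $\frac12\ln$ coefficient is chosen to kill the remaining terms of order $1/n$. Concretely, $\frac12\big(\frac1n\ln(n-1)+\frac{n-1}{n}\ln(n-2)-\ln n\big)=-\frac{1}{n}+O(1/n^2)$, while $\ln n-\frac{n-1}{n}\ln(n-1)=\frac{\ln n}{n}+\frac1n+O(1/n^2)$. The $\frac1n$ parts cancel, and the $\frac{\ln n}{n}$ part is the one absorbed in the previous item.
\end{itemize}
I will carry out this bookkeeping carefully, keeping the $O(1/n^2)$ and $O(\ln n/n^2)$ remainders explicit, since these are what must be summable.
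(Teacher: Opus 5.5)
Your proposal is correct and is essentially the paper's own proof. The paper defines the same $h_n$ and substitutes it into the recursion of Proposition 1. It expands $\ln(n-1)$, $\ln(n-2)$ and their squares to get $|h_n|\le O(\ln n/n^2)+\max(|h_{n-1}|,|h_{n-2}|)$, then concludes from the convergence of $\sum \ln n/n^2$. Your explicit cancellation bookkeeping also checks out: the $\frac14\ln^2$ pieces give $-\ln n/n$, the $\frac12\ln$ pieces give $-1/n$, and the explicit term gives $\ln n/n+1/n$.
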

\begin{proof}
For each natural number $n$, put $h_n:=H_n-1/4\ln^2{n}-1/2\ln{n}.$ Proposition \ref{enentrec} and formulas \eqref{enf1} imply the following asymptotic relation:
\begin{align*}
    |h_n|=&\biggl|\frac{1}{n}\cdot H_{n-1}+\frac{n-1}{n}\cdot H_{n-2}+\ln{n}-\frac{n-1}{n}\cdot\ln{(n-1)}-\left(\frac{1}{4}\ln^2{n}+\frac{1}{2}\ln{n}\right)\biggr|\\
    =&\biggl|\frac{1}{n}\cdot\left(\frac{1}{4}\ln^2{(n-1)}+\frac{1}{2}\ln{(n-1)}+h_{n-1}\right)+\frac{n-1}{n}\cdot\left(\frac{1}{4}\ln^2{(n-2)}+\frac{1}{2}\ln{(n-2)}+h_{n-2}\right)\\
    &-\frac{1}{4}\ln^2{n}+\frac{1}{2}\ln{n}-\frac{n-1}{n}\cdot\ln{(n-1)}\biggr|\\
    \le& O\left(\frac{\ln{n}}{n^2}\right)+\biggl|\frac{1}{n}\cdot h_{n-1}+\frac{n-1}{n}\cdot h_{n-2}\biggr|+\biggl|-\frac{1}{4}\ln^2{n}+\frac{1}{4n}\cdot\ln^2{n}\\
    &+\frac{n-1}{4n}\cdot\left(\ln^2{n}-\frac{4\ln{n}}{n}\right)+\frac{1}{2}\ln{n}-\frac{2n-3}{2n}\cdot\left(\ln{n}-\frac{1}{n}\right)+\frac{n-1}{2n}\cdot\left(\ln{n}-\frac{2}{n}\right)\biggr|\\
    \le& O\left(\frac{\ln{n}}{n^2}\right)+\max{(|h_{n-1}|,|h_{n-2}|)}.
\end{align*}

The theorem now follows from the convergence of the series $\sum_{n=1}^\infty \ln{n}/n^2.$
\end{proof}

\begin{enCol}\label{ens1} 
For any sequence $\{g(n)\}_{n=1}^\infty$ such that $g(n)/\ln^2{n}\to \infty,$ the sequence $f(n)=\exp{g(n)}$ is covering.
\end{enCol}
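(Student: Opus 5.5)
The plan is to combine a Markov-type bound with the entropy asymptotics of Theorem \ref{ent1}. The key observation is that every vertex outside the top $\lfloor f(n)\rfloor$ has small measure, so each such vertex contributes at least $g(n)$ per unit of mass to the entropy sum.

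First I dispose of the trivial case. If $\lfloor f(n)\rfloor\ge|\mathbb{YF}_n|$, then $A_f(n)=\mathbb{YF}_n$ by convention, so $\overline{A_f}(n)$ is empty. Otherwise, fix $n$ and $v\in\overline{A_f}(n)$. Every vertex of $A_f(n)$ has measure at least $\mu_P(v)$, and $A_f(n)$ has $\lfloor f(n)\rfloor$ elements, so
\begin{gather*}
(\lfloor f(n)\rfloor+1)\,\mu_P(v)\le \mu_P(A_f(n))+\mu_P(v)\le 1 .
\end{gather*}
Since $\lfloor f(n)\rfloor+1>f(n)$, this gives $\mu_P(v)\le 1/f(n)$, that is, $-\ln\mu_P(v)\ge g(n)$.

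Next I sum over $\overline{A_f}(n)$. Because all terms $-\mu_P(w)\ln\mu_P(w)$ are nonnegative, I drop those with $w\in A_f(n)$ and obtain
\begin{gather*}
g(n)\cdot\mu_P(\overline{A_f}(n))\le-\sum_{v\in\overline{A_f}(n)}\mu_P(v)\ln\mu_P(v)\le H_n .
\end{gather*}
By Theorem \ref{ent1}, $H_n\le C\ln^2 n$ for $n\ge 2$ with some constant $C$. Since $g(n)/\ln^2 n\to\infty$, we have $g(n)>0$ for large $n$, and therefore
\begin{gather*}
\mu_P(\overline{A_f}(n))\le \frac{C\ln^2 n}{g(n)}\longrightarrow 0 .
\end{gather*}
This is exactly the statement that $f$ is covering.

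There is no real obstacle in this argument. The only care needed is in the comparison $\mu_P(v)\le 1/f(n)$, which must use $\lfloor f(n)\rfloor+1>f(n)$ rather than $\lfloor f(n)\rfloor$ itself.
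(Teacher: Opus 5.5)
Your proposal is correct and follows the paper's own proof: you bound $\mu_P(v)\le 1/f(n)$ for $v\in\overline{A_f}(n)$, derive $g(n)\,\mu_P(\overline{A_f}(n))\le H_n$, and then apply $H_n=O(\ln^2 n)$ from Theorem \ref{ent1}. You also spell out the step the paper calls ``easy to see'' (the $\lfloor f(n)\rfloor+1>f(n)$ comparison) and the degenerate case $A_f(n)=\mathbb{YF}_n$, which is welcome extra care.
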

\begin{proof}
Fix a natural number $n.$ It is easy to see that, for each vertex $v\in\overline{A_f}(n)$, we have $\mu_P(v)\le 1/f(n),$ which implies that $-\ln{\mu_P(v)}\ge \ln{f(n)}=g(n),$ and hence,
    \begin{align*}
    g(n) \cdot \mu_P(\overline{A_f}(n))\le -\sum_{v\in \overline{A_f}(n)}\mu_P(v)\cdot \ln{\mu_P(v)}\le  H_n\le O(\ln^2{n}).
    \end{align*}
    It follows immediately from this inequality that $\mu_P(\overline{A_f}(n))\to 0,$ as required.
\end{proof}

\section{Covering sequences of the form $\exp{(c\cdot\ln^2{n})}$}

In this section, we need the following notation:

Let $n\in\mathbb{N}_0.$ Denote by $S_n$ the set of permutations of the set $\overline{1,n}.$ It is well known that $|S_n|=n!.$ Also, for $i\in\mathbb{Z}$, denote by $S_{n,i}$ the set of permutations of the set $\overline{1,n}$ with exactly $i$ odd cycles, and by
$\mathbb{YF}_{n,i}$ the set of vertices of the Young--Fibonacci graph of rank $n$ with exactly $i$ ones. Clearly, for $i\notin \overline{n}$, we have $S_{n,i}=\mathbb{YF}_{n,i}=\varnothing.$

In addition, let $W(z)$ be the Lambert $W$ function. The only property of this function used in this paper is that ${1}/(2W(1/e))\approx 1,79556$ is a solution of one particular equation.

The following proposition can be deduced, for example, from \cite[Section~4.4]{enFo2}, but we give an elementary proof.
\begin{enProp}\label{enp2}
    Let $n\in\mathbb{N}_0,$ $i\in\overline{n}.$ Then
    \begin{gather*} \sum_{v\in\mathbb{YF}_{n,i}}D^2(v)=|S_{n,i}|,\qquad \mu_P(\mathbb{YF}_{n,i})=\frac{|S_{n,i}|}{n!}.
    \end{gather*}
\end{enProp}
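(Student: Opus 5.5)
The plan is to prove the first equality $\sum_{v\in\mathbb{YF}_{n,i}}D^2(v)=|S_{n,i}|$ by induction on $n$. The second equality then follows at once by dividing by $n!$ and using the definition $\mu_P(v)=D^2(v)/n!$. The idea is to show that both sides satisfy the same two-term recurrence in $n$, mirroring the split in Proposition \ref{enentrec} according to the first letter of a word.

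On the Young--Fibonacci side, every word of rank $n\ge2$ is either $1v$ with $v\in\mathbb{YF}_{n-1}$ or $2v$ with $v\in\mathbb{YF}_{n-2}$. The prefix $1$ raises the number of ones by one, while the prefix $2$ leaves it unchanged. So $\mathbb{YF}_{n,i}$ is the disjoint union of $1\mathbb{YF}_{n-1,i-1}$ and $2\mathbb{YF}_{n-2,i}$. By \eqref{enfd}, $D(1v)=D(v)$ and $D(2v)=(n-1)D(v)$ for $|v|=n-2$. Hence
\[
\sum_{v\in\mathbb{YF}_{n,i}}D^2(v)=\sum_{v\in\mathbb{YF}_{n-1,i-1}}D^2(v)+(n-1)^2\sum_{v\in\mathbb{YF}_{n-2,i}}D^2(v).
\]

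On the permutation side I want the matching identity $|S_{n,i}|=|S_{n-1,i-1}|+(n-1)^2|S_{n-2,i}|$, and I expect this combinatorial step to be the main obstacle. I will classify $s\in S_{n,i}$ by the value $a=s(n)$.

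If $a=n$, then $n$ is a fixed point, which is an odd cycle, and removing it gives a bijection onto $S_{n-1,i-1}$.

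If $a\ne n$ (there are $n-1$ choices of $a$), I delete the consecutive pair $n\mapsto a$ from its cycle, i.e. excise both elements and close the cycle up. A cycle of length exactly $2$ disappears entirely. Either way the cycle length drops by $2$, so parity is preserved and the number of odd cycles stays $i$. The result is a permutation of the $(n-2)$-element set $\overline{1,n-1}\setminus\{a\}$, which I relabel order-preservingly as $\overline{1,n-2}$.

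Conversely, from such a permutation I can reinsert the block "$n$ followed by $a$" in exactly $n-1$ ways. There are $n-2$ ways to put it right after some element $x$, between $x$ and $s'(x)$, and one more way as a new $2$-cycle $(n\,a)$. Each insertion preserves the number of odd cycles. This gives $(n-1)\cdot(n-1)=(n-1)^2$ preimages per permutation in $S_{n-2,i}$. The care needed is exactly in checking that this map is a bijection, including the $2$-cycle case.

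Finally, with base cases $n=0,1$, both sides satisfy the same recurrence with the same initial values, and induction finishes the proof. Here $|S_{0,0}|=1=D^2(\varepsilon)$, $|S_{1,1}|=1$, and the other values are zero. The conventions $S_{n,i}=\mathbb{YF}_{n,i}=\varnothing$ for $i\notin\overline{n}$ handle the boundary indices.
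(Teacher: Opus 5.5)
Your proposal is correct and matches the paper's proof essentially step for step. You split words by their first letter and permutations by $a=s(n)$, with removal and reinsertion of the pair $n\mapsto a$ in $n-1$ ways, giving the common recurrence $|S_{n,i}|=|S_{n-1,i-1}|+(n-1)^2|S_{n-2,i}|$. You are, if anything, more explicit than the paper about the $2$-cycle case and the boundary indices.
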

\begin{proof}
Clearly, these two equalities are equivalent. We prove the first by induction on $n.$ The base case $n\in\{0,1\}$ is obvious. We prove the induction step for $n\ge 2.$ 

Fix $i\in\overline{n},$ $a\in\overline{1,n}$ and consider permutations $s\in S_{n,i},$ such that $s(n)=a.$ Clearly, if $a=n,$ the number of such permutations is exactly $|S_{n-1,i-1}|.$ If $a\in\overline{1,n-1},$ then, by removing the pair of elements $n,a$ from the cycle to which they belong, we obtain all permutations of the set $\overline{1,n-1}\setminus a,$ containing exactly $i$ odd cycles, each exactly $n-1$ times, since we can reinsert the pair of elements $n,a$ in exactly $n-1$ positions. Thus, for all $i\in\overline{n}$, we have $|S_{n,i}|=|S_{n-1,i-1}|+(n-1)^2|S_{n-2,i}|.$
    
    It remains to note that, by \eqref{enfd}, we have
    \begin{align*}
    \sum_{v\in\mathbb{YF}_{n,i}}D^2(v)=&\sum_{v\in\mathbb{YF}_{n-1,i-1}}D^2(1v)+\sum_{v\in\mathbb{YF}_{n-2,i}}D^2(2v)=\\
    =&\sum_{v\in\mathbb{YF}_{n-1,i-1}}D^2(v)+(n-1)^2\cdot\sum_{v\in\mathbb{YF}_{n-2,i}}D^2(v)
    \end{align*}
and to apply the induction hypothesis.
\end{proof}

The exponential generating function of the sequence $|S_n|=n!$ counting permutations of $n$ elements is easy to compute directly, but we recall how it is related to the decomposition of permutations into cycles. 

For each $k\in \mathbb{N}$, by a $k$-cycle we mean a directed graph
on $k$ vertices that forms a cycle (for $k=1$, this graph is
a loop). 
Note that, for $k\in\mathbb{N}$, the exponential generating function (in $x$) of the number of $k$-cycles on the
vertex set $\overline{1,n}$ is $(k-1)!\cdot x^k/k!=x^k/k$. 
Thus, the exponential generating function of the number of graphs that are
a $k$-cycle for some $k$ is
$\sum_k x^k/k$.

By the basic combinatorial formula ALL=exp(CONNECTED), the exponential generating function of the number of directed graphs on the vertex set
$\overline{1,n}$, in which every connected component is a $k$-cycle for some $k$, is $\exp{(\sum x^k/k)}.$ 
But such graphs correspond exactly to permutations of
$\overline{1,n}$. Thus,

\begin{gather*}
    \sum_{n=0}^\infty|S_n|\cdot\frac{x^n}{n!}=\exp{\sum_{k=1}^\infty\frac{x^k}{k}}=\exp{(-\ln{(1-x))}}=\frac{1}{1-x}.
\end{gather*}

Now we compute the bivariate exponential generating function
of the number of permutations, taking into account the number of odd cycles.
    \begin{enProp}\label{enp3}
        The following identity holds for formal power series:
        \begin{gather}\label{enp31}
        \sum_{n=0}^\infty\sum_{i=0}^n|S_{n,i}|\cdot t^i\cdot\frac{x^n}{n!}=\frac{\exp{\left(\frac{t}{2}\cdot\ln{\frac{1+x}{1-x}}\right)}}{\sqrt{1-x^2}}.
        \end{gather}    
        Moreover, \eqref{enp31} holds for all real
        $x\in (0,1)$ and
        $t>0$.
    \end{enProp}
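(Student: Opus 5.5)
The plan is to use the exponential formula (ALL $=\exp$(CONNECTED)) in its weighted form, as in the computation of $\sum_n |S_n|x^n/n!$ just above. A permutation of $\overline{1,n}$ is a set of disjoint cycles. If each odd cycle carries weight $t$ and each even cycle weight $1$, the weight of a permutation is $t^{i}$, where $i$ is its number of odd cycles. The left-hand side of the proposition is then the exponential generating function of these weighted structures. The number of $k$-cycles on a fixed $k$-element set is $(k-1)!$, so the connected weighted EGF is
\[
\sum_{k\ \text{even}}\frac{x^k}{k}+t\sum_{k\ \text{odd}}\frac{x^k}{k}
=\sum_{k\ge1}\frac{x^k}{k}+(t-1)\sum_{k\ge1}\frac{x^{2k-1}}{2k-1}.
\]
The weighted exponential formula holds as an identity of formal power series in $x$ with coefficients in $\mathbb{Q}[t]$, since the weight is multiplicative over cycles.

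Next I identify the series. We have $\sum_{k\ge1}x^k/k=-\ln(1-x)$, and the odd part is
\[
\sum_{k\ge1}\frac{x^{2k-1}}{2k-1}=\tfrac12\bigl(-\ln(1-x)+\ln(1+x)\bigr)=\tfrac12\ln\frac{1+x}{1-x}.
\]
The exponent is therefore
\[
-\ln(1-x)+\frac{t-1}{2}\ln\frac{1+x}{1-x}=\frac t2\ln\frac{1+x}{1-x}-\frac12\ln\bigl((1-x)(1+x)\bigr).
\]
Exponentiating gives exactly the right-hand side, with the factor $(1-x^2)^{-1/2}$. All manipulations of $\ln$ and $\exp$ here are identities of formal series with zero constant term in the logarithms, so the formal identity holds.

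For the analytic statement, fix $t>0$. The function $\exp\bigl(\frac t2\ln\frac{1+x}{1-x}\bigr)(1-x^2)^{-1/2}=(1+x)^{(t-1)/2}(1-x)^{-(t+1)/2}$ is analytic in the disc $|x|<1$. Its Taylor series therefore converges there and coincides, coefficient by coefficient, with the formal series, because formal composition agrees with analytic composition for convergent series. The left-hand side, grouped by powers of $x$, is exactly this Taylor series. Hence the two sides agree for all $x\in(0,1)$, and the double sum is finite.

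The only delicate point is that the left-hand side is a double series. Since all its terms are nonnegative for $x,t>0$, it may be summed in any order without changing the value, by Tonelli. This makes the analytic step routine rather than a real obstacle.
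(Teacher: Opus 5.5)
Your proposal is correct and follows essentially the same route as the paper: the weighted exponential formula with weight $t$ on odd cycles, the identification $\sum_{k\ge1}x^{2k-1}/(2k-1)=\frac12\ln\frac{1+x}{1-x}$, and then the passage from the formal to the functional identity. That passage matches coefficients with the convergent Taylor series of the right-hand side and uses nonnegativity of the terms so that the order of summation does not matter. Your explicit rewriting of the right-hand side as $(1+x)^{(t-1)/2}(1-x)^{-(t+1)/2}$ is a small, harmless clarification of the analytic step.
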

    \begin{proof} First, we prove the formal equality \eqref{enp31}.
    For even natural numbers $k$, put $r_k:=x^k/k,$ and for odd natural numbers $k$, put $r_k:=t \cdot x^k/k.$ By an argument similar to the one above, it is easy to see that the left-hand side of \eqref{enp31} equals
    \begin{align*}
        \exp{\sum_{k=1}^\infty r_k}=&\exp{\left(
        \sum_{k=1}^\infty \frac{x^k}{k}+(t-1)\cdot\sum_{k=1}^\infty \frac{x^{2k-1}}{2k-1}\right)}\\
        =&\exp{\left(-\ln{(1-x)}+\frac{t-1}{2}\cdot\ln{\frac{1+x}{1-x}}\right)},
    \end{align*}
which is equivalent to \eqref{enp31}. The functional identity is deduced
from the formal one in the standard way:
for a given $t>0$, the right-hand side of \eqref{enp31} is represented by a power series
convergent for $|x|<1$
in $x$. The power series on the left also converges,
because its coefficients are polynomials in $t$ and,
by the formal equality \eqref{enp31}, are the same
as those on the right. Thus, the sums of these series are equal and finite.
The sum on the left does not depend on the order of the terms, since they are nonnegative.
    \end{proof}
    Next, we assert that, for large $n$, the Plancherel measure on level $n$ is concentrated on vertices with a sufficiently small number of ones:
\begin{enLemma}\label{enl1}
    Let $c>1/(2W(1/e))\approx 1,79556.$ Then
    \begin{gather*}
    \sum_{i=\lceil c\cdot\ln{n} \rceil}^n \mu_P(\mathbb{YF}_{n,i})\xrightarrow[n\to\infty]{}0.
    \end{gather*}                
\end{enLemma}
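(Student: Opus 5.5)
We will use an exponential Chernoff-type bound built from the generating function of Proposition \ref{enp3}. By Proposition \ref{enp2}, $\mu_P(\mathbb{YF}_{n,i})=|S_{n,i}|/n!$. Fix $x\in(0,1)$ and $t\ge 1$. Every coefficient of the left-hand side of \eqref{enp31} is nonnegative, so we may drop all terms except those with this fixed $n$ and $i\ge m$, and then use $t^i\ge t^m$ for $i\ge m$. This gives
\begin{gather*}
\sum_{i=m}^n \mu_P(\mathbb{YF}_{n,i})\le \frac{\exp\left(\frac{t}{2}\ln\frac{1+x}{1-x}\right)}{\sqrt{1-x^2}\cdot t^m\cdot x^n}.
\end{gather*}
The argument then reduces to choosing the parameters $x,t$ as functions of $n$, with $m=\lceil c\ln n\rceil$.

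We take $x=1-1/n$. Then $x^n\to e^{-1}$, the factor $\sqrt{1-x^2}$ is of order $n^{-1/2}$, and $\ln\frac{1+x}{1-x}=\ln(2n-1)\approx\ln n$. With this choice the numerator equals $\exp\left(\frac t2\ln(2n-1)\right)$.

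Next we optimize over $t$. The quantity to minimize is $\frac t2 L - m\ln t$, where $L=\ln(2n-1)$, and the minimum is attained at $t=2m/L\approx 2c$. For definiteness we take $t=2c\ln n/\ln(2n-1)$. This tends to $2c>1$, so $t\ge1$ holds for large $n$. With this $t$ the numerator is exactly $n^{c}$, and $t^m$ is essentially $(2c)^{c\ln n}$. The correction from $\ln(2n-1)$ versus $\ln n$ contributes only a bounded factor, because $\left(\ln n/\ln(2n-1)\right)^{c\ln n}$ stays bounded. The ceiling in $m$ costs at most a constant factor as well.

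Collecting everything, the bound becomes
\begin{gather*}
C'\sqrt n\left(\frac{e}{2c}\right)^{c\ln n}=C'\left(\frac{\exp(1+1/(2c))}{2c}\right)^{c\ln n}.
\end{gather*}
The base $\exp(1+1/(2c))/(2c)$ is strictly decreasing in $c$, so the bound tends to $0$ exactly when this base is less than $1$. The equation $\exp(1+1/(2c))=2c$ can be rewritten with $u=1/(2c)$ as $ue^{u}=e^{-1}$, so $u=W(1/e)$ and $c=1/(2W(1/e))$. Hence for every $c$ above this threshold the base is below $1$, and the sum tends to $0$.

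The main obstacle is bookkeeping: we have to check that the polynomial prefactor $\sqrt n$ is correctly absorbed into the exponent (this is where the $1/(2c)$ term comes from), and that the discrepancies between $\ln(2n-1)$ and $\ln n$ only affect constants and do not change the threshold.
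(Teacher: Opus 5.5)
Your proposal is correct and follows the paper's proof essentially step for step. You use the same Chernoff-type bound from Propositions \ref{enp2} and \ref{enp3}, the same substitution $x=1-1/n$, $t=2c\ln n/\ln(2n-1)$, $m=\lceil c\ln n\rceil$, and reach the same final estimate $C'\bigl(\exp(1+1/(2c))/(2c)\bigr)^{c\ln n}$. Two small precisions: what you actually need is that $(\ln n/\ln(2n-1))^{c\ln n}\ge 2^{-c}$, i.e. that it is bounded away from zero, and the ceiling in $m$ costs nothing, since $t\ge 1$ gives $t^m\ge t^{c\ln n}$.
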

\begin{proof}
From Propositions \ref{enp2} and \ref{enp3}, it is easy to see that, for any $n,m\in\mathbb{N}_0,$ and $x\in(0,1),$ $t\ge 1$, we have
    \begin{gather}\label{ens2}
    \sum_{i=m}^n \mu_P(\mathbb{YF}_{n,i})\le \frac{\exp{\left(\frac{t}{2}\cdot\ln{\frac{1+x}{1-x}}\right)}}{\sqrt{1-x^2}\cdot t^m \cdot x^n}.
    \end{gather}            

For each natural number $n$, substitute into \eqref{ens2} the values
    \begin{gather*}
        x=1-\frac{1}{n},\qquad t=\frac{2c\cdot \ln{n}}{\ln{\frac{1+x}{1-x}}}=\frac{2c\cdot\ln{n}}{\ln{(2n-1)}},\qquad m=\lceil c\cdot\ln{n} \rceil.
    \end{gather*}
Clearly, for sufficiently large $n$, the condition $t\ge 1$ holds. Thus, there exists a constant $C',$ such that, for all natural numbers $n\ge 2$, we have
\begin{align*}
    \sum_{i=\lceil c\cdot\ln{n} \rceil}^n \mu_P(\mathbb{YF}_{n,i})\le& \frac{\exp{(c\cdot \ln{n})}}{\sqrt{\frac{2}{n}-\frac{1}{n^2}}\cdot\left(\frac{2c\cdot\ln{n}}{\ln{(2n-1)}}\right)^{\lceil c\cdot\ln{n} \rceil} \cdot \left(1-\frac{1}{n}\right)^n}\\
    \le& C'\cdot\sqrt{n}\cdot \left(\frac{e}{2c}\right)^{c\cdot \ln{n}}= C'\cdot\left(\frac{\exp{\left(1+\frac{1}{2c}\right)}}{2c}\right)^{c\cdot\ln{n}}.
\end{align*}
Clearly, on the interval $(0,\infty)$, the function $\exp(1+1/(2c))/(2c)$ decreases monotonically to zero. It equals one precisely at $c=1/(2W(1/e)).$ The lemma follows immediately.
\end{proof}

Now, for large $n$, we bound from above the number of vertices on level $n,$ on which we have proved that the Plancherel measure is concentrated:
\begin{enProp}\label{enp4}
Let $c>1/(2W(1/e))\approx 1,79556.$ Then, for sufficiently large $n$,
    \begin{gather*}
        \sum_{i=0}^{\lceil c\cdot\ln{n} \rceil-1}|\mathbb{YF}_{n,i}|\le n^{c\cdot\ln{n}}.
    \end{gather*}        
\end{enProp}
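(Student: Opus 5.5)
The count will be done exactly, by parity of $n$ and $i$, followed by a crude bound on binomial coefficients. A word in $\mathbb{YF}_{n,i}$ has $i$ ones and $(n-i)/2$ twos. Hence $\mathbb{YF}_{n,i}=\varnothing$ unless $i\equiv n \pmod 2$. When $i\equiv n$, the word has $(n+i)/2$ digits in total, and the set is determined by choosing which of these positions carry the ones, so
\begin{gather*}
|\mathbb{YF}_{n,i}|=\binom{(n+i)/2}{i}.
\end{gather*}
For even $n$, writing $i=2j$, this is $\binom{n/2+j}{2j}$. For odd $n$, writing $i=2j+1$, it is $\binom{(n+1)/2+j}{2j+1}$. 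The odd case differs only by shifts of $1$, so I will treat both uniformly.

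Put $K=\lceil c\ln n\rceil-1\le c\ln n$. The sum has at most $K+1\le c\ln n+1$ nonzero terms. Each term is bounded using $\binom{N}{r}\le N^r$ with $N=(n+i)/2\le (n+c\ln n)/2$ and $r=i\le c\ln n$. Since $N\ge 1$, we have $N^r\le N^{c\ln n}$. This gives
\begin{gather*}
\sum_{i=0}^{K}|\mathbb{YF}_{n,i}|\le (c\ln n+1)\cdot\left(\frac{n+c\ln n}{2}\right)^{c\ln n}.
\end{gather*}

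The last step is to compare this with $n^{c\ln n}$, and it is the only place any care is needed, though it is routine. For large $n$ we have $(n+c\ln n)/2\le \tfrac{2}{3}n$. The right-hand side is then at most
\begin{gather*}
(c\ln n+1)\cdot (2/3)^{c\ln n}\cdot n^{c\ln n}=(c\ln n+1)\, n^{-c\ln(3/2)}\, n^{c\ln n}.
\end{gather*}
The prefactor $(c\ln n+1)\,n^{-c\ln(3/2)}$ tends to $0$ because $c>0$, so it is below $1$ for all sufficiently large $n$. This yields the claimed inequality.

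Note that the threshold $c>1/(2W(1/e))$ plays no role in this estimate; any $c>0$ works. The hypothesis is stated only so that the proposition can be combined with Lemma~\ref{enl1} to obtain Theorem~\ref{t2}.
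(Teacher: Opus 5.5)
Your proposal is correct and follows essentially the same route as the paper. Both arguments use the exact parity-based count $|\mathbb{YF}_{n,i}|=\binom{(n+i)/2}{i}$, bound each term by $\bigl(\frac{n+c\ln n}{2}\bigr)^{c\ln n}$, multiply by the number of terms, and compare with $n^{c\ln n}$. You treat odd $n$ uniformly and make explicit the final comparison, via $\frac{n+c\ln n}{2}\le\frac{2}{3}n$, which the paper leaves implicit; you also rightly observe that any $c>0$ suffices here.
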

    \begin{proof}
        Let $n$ be even, $i\in\overline{n}.$ It is easy to see that, if $i$ is odd, the set $\mathbb{YF}_{n,i}$ is empty, and if $i$ is even and $i=2j,$ we have $|\mathbb{YF}_{n,i}|=\binom{n/2+j}{2j},$ since $\mathbb{YF}_{n,i}$ contains all vertices with $n/2+j$ digits, of which $2j$ are ones. Thus, for sufficiently large $n$,
    \begin{gather*}
        \sum_{i=0}^{\lceil c\cdot\ln{n} \rceil-1}|\mathbb{YF}_{n,i}|\le  \sum_{j=0}^{\left\lfloor\frac{ c\cdot\ln{n} }{2}\right\rfloor}\binom{\frac{n}{2}+j}{2j}\le c\cdot \ln{n}\cdot\left(\frac{n}{2}+\frac{c\cdot\ln{n}}{2}\right)^{c\cdot\ln{n}} \le n^{c\cdot\ln{n}}.
    \end{gather*}        
    The case of odd $n$ is entirely analogous.
        \end{proof}

Lemma \ref{enl1} and Proposition \ref{enp4} immediately imply
\begin{entheorem}\label{ent2}
Let $c>1/(2W(1/e))\approx 1,79556.$ Then the sequence
$\{f(n)\}_{n=1}^\infty={\exp {(c\cdot\ln^2{n})}},$ is covering.
\end{entheorem}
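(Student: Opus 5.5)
The plan is to combine Lemma \ref{enl1} and Proposition \ref{enp4} through the extremal property of $A_f(n)$. Fix $c>1/(2W(1/e))$. Since the inequality in Lemma \ref{enl1} is strict, I choose an intermediate constant $c_1$ with $1/(2W(1/e))<c_1<c$. For each $n$, put
\[
B_n:=\bigcup_{i=0}^{\lceil c_1\ln n\rceil-1}\mathbb{YF}_{n,i}.
\]
Lemma \ref{enl1}, applied with $c_1$, gives $\mu_P(B_n)\to 1$. Proposition \ref{enp4}, applied with $c_1$, gives $|B_n|\le n^{c_1\ln n}=\exp(c_1\ln^2 n)$ for all sufficiently large $n$.

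The key step is to compare $B_n$ with $A_f(n)$, where $f(n)=\exp(c\ln^2 n)$. Because $c_1<c$, we have $\exp(c_1\ln^2 n)\le \exp(c\ln^2 n)-1\le\lfloor f(n)\rfloor$ for large $n$, and hence $|B_n|\le\lfloor f(n)\rfloor$. The set $A_f(n)$ consists of $\lfloor f(n)\rfloor$ vertices of largest measure, or equals all of $\mathbb{YF}_n$ if $\lfloor f(n)\rfloor$ exceeds $|\mathbb{YF}_n|$. Its measure is therefore at least that of any subset of $\mathbb{YF}_n$ with at most $\lfloor f(n)\rfloor$ elements. To see this, take such a subset $B$ and pad it to size $\min(\lfloor f(n)\rfloor,|\mathbb{YF}_n|)$. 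Among sets of that size, the sum of measures is maximized by taking the largest values, which is exactly $A_f(n)$.

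Applying this with $B=B_n$ gives $1\ge\mu_P(A_f(n))\ge\mu_P(B_n)\to1$, so $f$ is covering.

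The only point needing care is the slack between the bound $n^{c\ln n}$ and the floor $\lfloor f(n)\rfloor$. Proposition \ref{enp4} gives exactly $n^{c\ln n}=f(n)$, which may not be an integer. It is safe, since $|B_n|$ is an integer and $|B_n|\le f(n)$ already implies $|B_n|\le\lfloor f(n)\rfloor$. The intermediate $c_1$ is therefore optional, and I will include it only for robustness. No genuine obstacle is expected; the whole proof is this bookkeeping.
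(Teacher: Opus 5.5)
Your proof is correct and takes the same route as the paper, which simply states that Lemma~\ref{enl1} and Proposition~\ref{enp4} immediately imply the theorem; your argument spells out that implication, using that the extremal set $A_f(n)$ has measure at least that of any set of at most $\lfloor f(n)\rfloor$ vertices. As you observe, the intermediate constant $c_1$ is unnecessary, since $|B_n|$ is an integer bounded by $n^{c\ln n}=f(n)$ and hence by $\lfloor f(n)\rfloor$.
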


\section{Non-covering sequences of the form $\exp{(c\cdot\ln^2{n})}$}

In this section, we need the following notation:

Let $a\in(1,2),$ $n\in\mathbb{N}_0.$ Then
    \begin{gather*}
        M_a(n):=\sum_{v\in\mathbb{YF}_n}\mu^a_P(v)=\sum_{v\in\mathbb{YF}_n}\frac{D^{2a}(v)}{(n!)^a}.
    \end{gather*}
\begin{enProp}\label{enp5}
    Let $a\in(1,2).$ Then $M_a(0)=M_a(1)=1,$ and, for $n\ge 2$, we have
    \begin{gather*}
        M_a(n)=\frac{1}{n^a}\cdot M_a(n-1)+\frac{(n-1)^a}{n^a}\cdot M_a(n-2).
    \end{gather*}
\end{enProp}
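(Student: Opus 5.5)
The plan is to split the sum defining $M_a(n)$ by the first letter of the word and then apply the recursion \eqref{enfd}. This mirrors the proof of Proposition \ref{enentrec} for the entropy.

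The base values come first. $\mathbb{YF}_0$ consists of the empty word, with $D(\varepsilon)=1$ and $0!=1$, so $M_a(0)=1$. $\mathbb{YF}_1$ consists of the single word $1$, with $D(1)=D(\varepsilon)=1$ and $1!=1$, so $M_a(1)=1$.

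Now let $n\ge 2$. Every word of rank $n$ is either $1v$ with $v\in\mathbb{YF}_{n-1}$ or $2v$ with $v\in\mathbb{YF}_{n-2}$, and this decomposition is a bijection. Hence
\begin{gather*}
M_a(n)=\sum_{v\in\mathbb{YF}_{n-1}}\frac{D^{2a}(1v)}{(n!)^a}+\sum_{v\in\mathbb{YF}_{n-2}}\frac{D^{2a}(2v)}{(n!)^a}.
\end{gather*}

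By \eqref{enfd} we have $D(1v)=D(v)$. Writing $n!=n\cdot(n-1)!$, the first sum equals $n^{-a}M_a(n-1)$.

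Also by \eqref{enfd}, for $v\in\mathbb{YF}_{n-2}$ we have $D(2v)=(n-1)D(v)$, so $D^{2a}(2v)=(n-1)^{2a}D^{2a}(v)$. Writing $(n!)^a=n^a(n-1)^a((n-2)!)^a$, the second sum becomes
\begin{gather*}
\frac{(n-1)^{2a}}{n^a(n-1)^a}\,M_a(n-2)=\frac{(n-1)^a}{n^a}\,M_a(n-2).
\end{gather*}

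Adding the two sums gives the claimed recursion. The argument is purely algebraic; the only care needed is the bookkeeping of the powers of $n$ and $n-1$ when factoring $(n!)^a$. The restriction $a\in(1,2)$ plays no role in this proposition.
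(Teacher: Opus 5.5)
Your proposal is correct and follows the paper's proof exactly. You split $\mathbb{YF}_n$ by the first letter, apply $D(1v)=D(v)$ and $D(2v)=(n-1)D(v)$, and factor $(n!)^a$ as $n^a\,((n-1)!)^a$, respectively $n^a(n-1)^a\,((n-2)!)^a$, just as the paper does.
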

\begin{proof}
The equality $M_a(0)=M_a(1)=1$ is clear. For $n\ge 2$, we use \eqref{enfd}:
\begin{align*}
M_a(n)=&\sum_{v\in\mathbb{YF}_{n-1}}\frac{D^{2a}(1v)}{(n!)^a}+\sum_{v\in\mathbb{YF}_{n-2}}\frac{D^{2a}(2v)}{(n!)^a}\\
=&\frac{1}{n^a}\cdot\sum_{v\in\mathbb{YF}_{n-1}}\frac{D^{2a}(v)}{((n-1)!)^a}+\frac{(n-1)^a}{n^a}\cdot\sum_{v\in\mathbb{YF}_{n-2}}\frac{D^{2a}(v)}{((n-2)!)^a}.
\end{align*}
The proposition follows.
\end{proof}

\begin{enLemma}\label{enl2}
    Let $a\in(1,2),$ $n\in\mathbb{N}_0.$ Then
    \begin{gather*} 
    M_a(n)\le\frac{\exp{\frac{a}{a-1}}}{n^{a/2}}.
    \end{gather*}    
\end{enLemma}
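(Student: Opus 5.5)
I will prove the lemma by induction on $n$, using the recursion of Proposition \ref{enp5}. The bare bound $C/n^{a/2}$ is not stable under the recursion, because the two coefficients $1/n^a$ and $(n-1)^a/n^a$ do not sum to something small enough. So I will carry a slowly growing correction factor. Concretely, for fixed $a\in(1,2)$ the induction claim is
\begin{gather*}
M_a(n)\le\frac{P_n}{(n+1)^{a/2}},\qquad P_n:=\prod_{m=1}^n\left(1+\frac{1}{m^a}\right).
\end{gather*}
Using $(n+1)$ rather than $n$ in the denominator avoids the singularity at $n=0$ and makes the base cases $n\in\{0,1\}$ immediate: $M_a(0)=M_a(1)=1$, while the right-hand side equals $1$ for $n=0$ and $2/2^{a/2}\ge 1$ for $n=1$.

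For the step $n\ge 2$, I substitute the induction hypotheses for $M_a(n-1)$ and $M_a(n-2)$ into Proposition \ref{enp5}. I then bound $P_{n-2}\le P_{n-1}$ and factor out $P_{n-1}/(n+1)^{a/2}$. What remains in the brackets is
\begin{gather*}
\frac{1}{n^a}\cdot\left(\frac{n+1}{n}\right)^{a/2}+\frac{(n^2-1)^{a/2}}{n^a}.
\end{gather*}
It suffices to show this is at most $1+1/n^a$. The second term equals $(1-1/n^2)^{a/2}\le 1-\frac{a}{2n^2}\cdot\theta$. More simply, by Bernoulli's inequality for exponents in $(0,1]$, it is at most $1-\frac{a}{2n^2}$ up to a harmless constant, and in any case it is at most $1-\frac{1}{2n^2}$. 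The first term is at most $\frac{1}{n^a}(1+\frac{a}{2n})$. So the sum is at most $1+\frac1{n^a}+\frac{a}{2n^{a+1}}-\frac{a}{2n^2}$. Since $a<2$, we have $n^{a+1}\ge n^2$, so the last two terms combine to a nonpositive quantity. This is the delicate point, and the main obstacle I expect: the gain $-a/(2n^2)$ from the $2$-branch must beat the loss from the $1$-branch, and this works exactly because $a+1>2$. I will check the Bernoulli estimate $(1-y)^{a/2}\le 1-\frac a2 y$, which is valid since $a/2\in(0,1)$.

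Finally, I bound the product using $1+y\le e^y$:
\begin{gather*}
P_n\le\exp\sum_{m\ge1}m^{-a}\le\exp\left(1+\int_1^\infty\frac{dm}{m^a}\right)=\exp\frac{a}{a-1}.
\end{gather*}
Together with $(n+1)^{-a/2}\le n^{-a/2}$, this gives the lemma. For $n=0$ the stated bound is infinite and hence trivially true.
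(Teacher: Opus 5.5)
Your proposal is correct and is essentially the paper's own proof: it uses the same strengthened hypothesis $M_a(n)\le \prod_{m=1}^n(1+m^{-a})/(n+1)^{a/2}$, the same factoring out of $P_{n-1}/(n+1)^{a/2}$, the same two Bernoulli estimates reducing the bracket to $\frac{1}{n^a}\bigl(1+\frac{a}{2n}\bigr)+1-\frac{a}{2n^2}\le 1+\frac{1}{n^a}$, and the same bound $P_n\le\exp\frac{a}{a-1}$. Two small slips need cleaning up: $n^{a+1}\ge n^2$ holds because $a>1$ (not because $a<2$), and you genuinely need the full bound $(1-n^{-2})^{a/2}\le 1-\frac{a}{2n^2}$, since the weaker $1-\frac{1}{2n^2}$ you also mention would not close the step (e.g.\ at $n=2$, $a=1.5$).
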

\begin{proof}
    First, we prove that, for all $a\in(1,2),$ $n\in\mathbb{N}_0$, we have
    \begin{gather}\label{enp6} 
    M_a(n)\le\frac{\prod_{m=1}^n(1+{1}/{m^a})}{(n+1)^{a/2}}.
    \end{gather}    
    For a fixed $a\in(1,2)$, we prove \eqref{enp6} by induction on $n.$ The base case $n\in\{0,1\}$ is obvious. We prove the induction step for $n\ge 2.$

    Apply Proposition \ref{enp5}, the induction hypothesis, and the generalized Bernoulli inequality:
    \begin{align*}      
    M_a(n)=&\frac{1}{n^a}\cdot M_a(n-1)+\frac{(n-1)^a}{n^a}\cdot M_a(n-2)\\
\le&\frac{1}{n^a}\cdot\frac{\prod_{m=1}^{n-1}(1+1/m^a)}{n^{a/2}}+\frac{(n-1)^a}{n^a}\cdot\frac{\prod_{m=1}^{n-2}(1+1/m^a)}{(n-1)^{a/2}}\\
\le& \frac{\prod_{m=1}^{n-1}(1+1/m^a)}{(n+1)^{a/2}}\cdot\left( \frac{1}{n^a}\cdot\frac{(n+1)^{a/2}}{n^{a/2}}+\frac{(n^2-1)^{a/2}}{n^a}\right)\\
\le& \frac{\prod_{m=1}^{n-1}(1+1/m^a)}{(n+1)^{a/2}}\cdot\left(\frac{1}{n^a}\cdot\left(1+\frac{a}{2n}\right)+\left(1-\frac{a}{2n^2}\right)\right).
\end{align*}    
The induction step follows.

Now we apply \eqref{enp6} to prove the lemma:
\begin{gather*}
    M_a(n)\le \frac{\prod_{m=1}^n(1+1/m^a)}{(n+1)^{a/2}} 
    \le\frac{\exp{{\sum_{m=1}^\infty {1/m^a}}}}{n^{a/2}}\le \frac{\exp{\left(1+\int_{1}^\infty{dm}/{m^a}\right)}}{n^{a/2}}=\frac{\exp{\frac{a}{a-1}}}{n^{a/2}}.
\end{gather*}    
\end{proof}

\begin{entheorem}\label{ent3}
The sequence $\{f(n)\}_{n=1}^\infty={\exp {(1/16\cdot\ln^2{n})}},$ is not covering.
\end{entheorem}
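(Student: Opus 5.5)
The plan is to use the Rényi-type moment bound of Lemma \ref{enl2} together with a Hölder/Jensen inequality on the set $A_f(n)$. Write $A=A_f(n)$ and $N=|A|\le \exp(\frac{1}{16}\ln^2 n)$. The function $y\mapsto y^a$ is convex for $a\in(1,2)$, so by Jensen (power mean) we get
\begin{gather*}
\sum_{v\in A}\mu_P^a(v)\ge N\cdot\left(\frac{\mu_P(A)}{N}\right)^a=\mu_P^a(A)\cdot N^{1-a}.
\end{gather*}
The left side is at most $M_a(n)$, and Lemma \ref{enl2} bounds that by $\exp(\frac{a}{a-1})\, n^{-a/2}$. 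Taking $a$-th roots gives
\begin{gather*}
\mu_P(A)\le \exp\left(\frac{1}{a-1}\right)\cdot n^{-1/2}\cdot N^{(a-1)/a}\le \exp\left(\frac{1}{a-1}-\frac{1}{2}\ln n+\frac{a-1}{16a}\ln^2 n\right).
\end{gather*}

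Next I optimise over $a$. I set $a-1=\delta$. The exponent is then at most
\begin{gather*}
\frac1\delta-\frac12\ln n+\frac{\delta}{16}\ln^2 n .
\end{gather*}
This is minimised at $\delta=4/\ln n$, which gives $\frac{\ln n}{4}-\frac{\ln n}{2}+\frac{\ln n}{4}=0$. At the borderline the bound is therefore only $O(1)$, so I must keep the exact factor $\frac{a-1}{a}$ rather than $\delta$. With $a=1+4/\ln n$ we have $\frac{a-1}{a}=\frac{4}{\ln n+4}$, and the last term becomes $\frac{\ln^2 n}{4(\ln n+4)}$. The exponent is then
\begin{gather*}
\frac{\ln n}{4}-\frac{\ln n}{2}+\frac{\ln n}{4}\cdot\frac{\ln n}{\ln n+4}=-\frac{\ln n}{\ln n+4}\to -1.
\end{gather*}
Hence $\mu_P(A_f(n))\le e^{-1+o(1)}<1/2$ for large $n$. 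This gives $\varlimsup\mu_P(A_f(n))<1$, i.e. $f$ is non-covering.

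The constraint $a\in(1,2)$ requires $4/\ln n<1$, i.e. $n>e^4\approx 54.6$, so I would take $n\ge 55$. The main obstacle is exactly this delicate balancing: the constant $1/16$ is the threshold at which the crude choice of $a$ yields only an $O(1)$ bound. One has to verify that the sign of the residual term $-\frac{\ln n}{\ln n+4}$ is strictly negative in the limit, and that the constant $\exp(\frac{a}{a-1})$ from Lemma \ref{enl2} contributes exactly $\exp(\frac{1}{a-1})$ after the $a$-th root, with no extra loss.
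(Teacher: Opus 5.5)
Your proposal is correct and matches the paper's proof step for step: you apply Jensen's inequality on $A_f(n)$ together with Lemma~\ref{enl2}, take $a$-th roots to get $\mu_P(A_f(n))\le \exp\bigl(\frac{1}{a-1}\bigr)\, n^{-1/2}\exp\bigl(\frac{1}{16}\ln^2 n\cdot\frac{a-1}{a}\bigr)$, and substitute $a=1+4/\ln n$ for $n\ge 55$ to obtain $\exp\bigl(-\frac{\ln n}{\ln n+4}\bigr)$. You also correctly identify that keeping the exact factor $\frac{a-1}{a}$, rather than bounding it by $a-1$, is what gives the strictly negative limiting exponent $-1$ at the borderline constant $1/16$.
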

\begin{proof}
Note that, for any $a\in(1,2),$ $n\in\mathbb{N}_0,$ by Lemma \ref{enl2} and Jensen's inequality for convex functions, we have
    \begin{gather*}
        \frac{\exp{\frac{a}{a-1}}}{n^{a/2}}\ge M_a(n)\ge\sum_{v\in A_f(n)}\mu_P^a(v)\ge\left(\frac{\mu_P(A_f(n))}{|A_f(n)|}\right)^a\cdot {|A_f(n)|}\ge \frac{\mu_P^a(A_f(n))}{\exp{(\frac{1}{16}\ln^2{n}\cdot(a-1) )}},
    \end{gather*}
and hence,
\begin{gather}\label{enfinal}
    \mu_P(A_f(n))\le \frac{\exp{\frac{1}{a-1}}}{\exp{\left(\frac{1}{2}\ln{n}\right)}}\cdot \exp{\left(\frac{1}{16}\ln^2{n}\cdot\frac{a-1}{a}\right)}.
\end{gather}

For each natural number $n\ge 55$, substitute into \eqref{enfinal} the value $a=1+4/\ln{n}:$
\begin{gather*}
    \mu_P(A_f(n))\le \frac{\exp{(\frac{1}{4}\ln{n})}}{\exp{\left(\frac{1}{2}\ln{n}\right)}}\cdot \exp{\frac{\frac{1}{4}\ln^2{n}}{\ln{n}+4}}=\exp{\left(-\frac{\ln{n}}{\ln{n}+4}\right)}.
\end{gather*}
This inequality shows that, for sufficiently large $n$, the value $\mu_P(A_f(n))$ is less than, for example, $1/2.$ The theorem follows immediately.
\end{proof}

\section{Acknowledgments}
I am very grateful to my academic adviser, Fedor Vladimirovich Petrov, for proposing the problem, for his patience in checking the proof and editing the paper, and for his moral support throughout the entire period of work. 

This work was supported by Saint Petersburg State University, project No. 159547043.

\section{AI disclosure}

The translation from Russian original was assisted by 
ChatGPT-6.1 Sol. Mathematical content is due to the author.

\end{document}